\theoremstyle{plain}
\newtheorem{theorem}{Theorem}[section]
\newtheorem{lemma}[theorem]{Lemma}
\newtheorem{cor}[theorem]{Corollary}
\theoremstyle{definition}
\newtheorem{remark}[theorem]{Remark}
\numberwithin{equation}{section}
\begin{document}

\title[A Minkowski inequality for Horowitz-Myers geon]
{A Minkowski inequality for Horowitz-Myers geon}

\author[Alaee]{Aghil Alaee}
\address{\parbox{\linewidth}{Aghil Alaee\\
		Department of Mathematics and Computer Science, Clark University, Worcester, MA 01610, USA\\
		Center of Mathematical Sciences and Applications, Harvard University, Cambridge, MA 02138, USA}}
\email{aalaeekhangha@clarku.edu, aghil.alaee@cmsa.fas.harvard.edu}
\author[Hung]{Pei-Ken Hung}
\address{\parbox{\linewidth}{Pei-Ken Hung\\
		Department of Mathematics, Massachusetts Institute of Technology, Cambridge, MA 02139, USA}}
\email{pkhung@mit.edu}
\thanks{A. Alaee acknowledges the support of the AMS-Simons travel grant. }
\date{\today}
\begin{abstract}We prove a sharp inequality for toroidal hypersurfaces in three and four dimensional
 Horowitz-Myers geon. This extend previous results on Minkowski inequality in the static spacetime to toroidal surfaces in asymptotically hyperbolic manifold with flat
 toroidal conformal infinity.
\end{abstract}
\maketitle

\section{Introduction}
The classical {Minkowski inequality} for a closed convex hypersurface $\Sigma$ with induced metric $\gamma$ in $\mathbb{R}^n$ reads
\begin{equation}\label{minko}
\int_{\Sigma}H\, \text{dvol}_\gamma\geq |\mathbb{S}^{n-1}|^{\frac{1}{n-1}}|\Sigma|^{\frac{n-2}{n-1}}.
\end{equation}Moreover, the rigidity holds if and only if $\Sigma$ is isometric to the $(n-1)$-sphere. The inequality \eqref{minko} has been improved to include star-shaped surfaces \cite{Guan1,Guan2}. One can replace the star-shaped condition by an outward minimizing condition using the weak inverse mean curvature flow of Huisken and Ilmanen \cite{HuiskenIlmanen}. It is an open problem whether \eqref{minko} holds for general mean convex
hypersurfaces. This inequality has been generalized to different settings. In \cite{BrendleHuanWang}, Brendle, Wang and the second author extended this inequality to convex, star-shaped hypersurface $\Sigma$ with induced metric $\gamma$ in the Anti-de-Sitter Schwarzschild space with horizon $\Sigma_{H}$ and static potential $\phi$
\begin{equation}
\int_{\Sigma}\phi H\, \text{dvol}_\gamma-n(n-1)\int_{\Omega}\phi\, \text{dvol}_g\geq (n-1)|\mathbb{S}^{n-1}|^{\frac{1}{n-1}}\left(|\Sigma|^{\frac{n-2}{n-1}}-|\Sigma_H|^{\frac{n-2}{n-1}}\right),
\end{equation}where $\Omega$ is a bounded region with boundary $\partial\Omega=\Sigma\cup\Sigma_H$. Moreover, the equality holds if and only if $\Sigma$ is a coordinate sphere. For bounded region with outward minimizing boundary
$\Sigma$ in the Schwarzschild space with mass $m$, Wei \cite{YWei} proved that
\begin{equation}\label{eq1.3}
\frac{1}{(n-1)|\mathbb{S}^{n-1}|}\int_{\Sigma}\phi H\,\text{dvol}_\gamma\geq \left(\frac{|\Sigma|}{|\mathbb{S}^{n-1}|}\right)^{\frac{n-2}{n-1}}-2m.
\end{equation}with rigidity for coordinate sphere. Recently, McCormick \cite{McCormickM} showed that the inequality \eqref{eq1.3} holds for asymptotically flat static spacetime of dimension $3\leq n\leq 7$. Note that all of these inequalities are for static manifolds with spherical infinity. 

In 1999, Horowitz and Myers \cite{HM} discovered a complete static spacetime $(N^{n+1},-\phi^2dt^2+g)$ with negative cosmological constant $-n$, that is a solution of the following static equation 
\begin{equation}\label{static}
g\Delta_g\phi+\phi\text{Ric}_g-\text{Hess}_g\phi=0, \qquad R_g=-n(n-1)
\end{equation}where $\text{Ric}_g$, $R_g$, $\Delta_g$, and $\text{Hess}_g$ are Ricci curvature, scalar curvature, Laplace-Beltrami operator, and the hessian with respect to metric $g$, respectively. The time constant slice of this spacetime is an asymptotically hyperbolic manifold with flat
toroidal conformal infinity and it is called \emph{the Horowitz-Myers geon} $(M,g,\phi)$ \cite{Ewoolgar}. The metric takes the form
\begin{equation}
g=ds^2+\left(\frac{d\phi}{ds}\right)^2d\xi^2+\phi^2\sum\limits_{i=3}^n(d\theta^i)^2,\qquad \phi(s):=\cosh^{\frac{2}{n}}\left(\frac{ns}{2}\right),
\end{equation}where $s\in [0,\infty)$ is the geodesic coordinate from central torus, $\xi\in [0,4\pi/n]$, and $\theta^i\in
[0,a_i]$ for $0<a_3\le\dots\le a_n$. The period of $\xi$ is chosen such that, after a coordinate change, the metric is smooth up to the central torus $\{s=0\}$. The function $\phi(s)$ satisfies the equation
	\begin{equation}\label{equ:phi}
	\frac{d\phi}{ ds} =\phi (1-\phi^{-n})^{1/2}. 
	\end{equation} 
	The total mass of the Horowitz-Myers geon is
\begin{equation}\label{def:mass}
m=-\frac{4\pi}{n}\prod_{i=3}^na_i\,.
\end{equation}They postulated that this may be the ground state for a conjectural nonsupersymmetric AdS/CFT correspondence \cite{HM}. More precisely, let $(M,g)$ be an asymptotically hyperbolic manifold with flat toroidal conformal infinity and scalar curvature $R(g)\geq -n(n-1)$. Then the total mass is at least equal to the mass of the Horowitz-Myers geon. Moreover, the rigidity holds if and only if $(M,g)$ is isometric to the Horowitz-Myers geon. The Horowitz-Myers conjecture follows from Conjecture 3 of \cite{HM}. Progress on the Horowitz-Myers conjecture has been very limited. For small perturbations of Horowitz-Myers the conjecture is true by Constable and Myers \cite{CostableMyers} and Woolgar proved the rigidity of this conjecture in 3 dimensions \cite{Ewoolgar}. Recently, in \cite{BCHMN},  there are more supporting evidence in the validity of this conjecture.  

In this paper, we investigate a Minkowski type inequality for toroidal hypersurfaces $\Sigma$ with induced metric $\gamma$ in the Horowitz-Myers geon $(M,g,\phi)$.  We consider a Minkowski-type quantity
\begin{align}\label{def:Q}
Q(\Sigma):=n(n-1)\int_{\Omega}\phi\, \text{dvol}_g-\int_{\Sigma}\phi H\, \text{dvol}_\gamma.
\end{align} 
Here $H$ is the mean curvature of $\Sigma$ in $M$ and $\Omega$ is the bonded region enclosed by $\Sigma$. Applying the Divergence Theorem and the static equation \eqref{static}, $Q(\Sigma)$ can be rewritten as
\begin{align}\label{def:Q2}
Q(\Sigma)=\int_{\Sigma}(n-1)g(\nabla\phi,\nu)- \phi H\, \text{dvol}_\gamma.
\end{align} 
We aim to prove that under certain convexity assumption,
\begin{equation}\label{mainresult}
Q(\Sigma)\leq -2^{-1}nm,
\end{equation}
where $m$ is the mass defined in \eqref{def:mass}. The main results are
\begin{theorem}\label{thm:3d}
	Let $n=3$ and $\Sigma\subset M$ be a graph over $T^{2}$. Suppose that the second fundamental form $\tilde{h}_{ab}$ of $\Sigma$ that corresponds to the conformal metric $\tilde{g}=\phi^2 g$ is non-negative definite and that
	\begin{equation}\label{eqassumption3D}
	\min_{\Sigma}\phi  >1.
	\end{equation} Then \eqref{mainresult} holds and the equality is achieved if and only if $\Sigma$ is given by a coordinate torus $s\equiv\textup{constant}$.
\end{theorem}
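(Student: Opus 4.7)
The plan is to work in the conformal metric $\tilde g=\phi^{2}g$ and combine it with Gauss--Bonnet on $\Sigma\cong T^{2}$. Under a conformal change $\tilde g=u^{2}g$ the second fundamental form transforms as $\tilde h_{ab}=u\,h_{ab}+(\partial_{\nu}u)\gamma_{ab}$; with $u=\phi$ this gives, after tracing, $\phi^{2}\tilde H=\phi H+(n-1)\partial_{\nu}\phi$, so $Q(\Sigma)=\int_{\Sigma}(\phi^{2}\tilde H-2\phi H)\,d\gamma$, and the hypothesis $\tilde h\geq 0$ becomes the pointwise inequalities $\phi\kappa_{i}+\partial_{\nu}\phi\geq 0$ for each principal curvature $\kappa_{i}$ of $\Sigma$ in $g$. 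Taking the product of the two eigenvalue inequalities gives the key algebraic bound
\[
\phi^{2}\kappa_{1}\kappa_{2}+\phi H\,\partial_{\nu}\phi+(\partial_{\nu}\phi)^{2}\geq 0.
\]

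Next, I would parametrize $\Sigma=\{s=f(\xi,\theta^{3})\}$ as a graph over $T^{2}$; the condition $\min_{\Sigma}\phi>1$ forces $f>0$, so $\Sigma$ is disjoint from the central torus and the graph representation is global. In the three-dimensional ambient manifold, the static equation \eqref{static} together with the identity $K_{M}(T_{p}\Sigma)=\tfrac{R_{g}}{2}-\mathrm{Ric}(\nu,\nu)$ simplifies to $K_{M}(T_{p}\Sigma)=-\mathrm{Hess}\,\phi(\nu,\nu)/\phi$ because $R_{g}=-n(n-1)=-6$ and $\mathrm{Ric}(\nu,\nu)=\mathrm{Hess}\,\phi(\nu,\nu)/\phi-n$. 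Gauss's equation $K_{\Sigma}=K_{M}(T_{p}\Sigma)+\kappa_{1}\kappa_{2}$, together with $\int_{\Sigma}K_{\Sigma}\,d\gamma=0$ from Gauss--Bonnet ($\chi(T^{2})=0$), therefore produces the integral identity
\[
\int_{\Sigma}\kappa_{1}\kappa_{2}\,d\gamma=\int_{\Sigma}\frac{\mathrm{Hess}\,\phi(\nu,\nu)}{\phi}\,d\gamma.
\]

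Combining the pointwise convexity inequality with this Gauss--Bonnet identity, and using the divergence theorem in $\Omega$ with $\Delta\phi=n\phi$ (a trace of the static equation) to convert $\int_{\Omega}\phi$-type terms into boundary integrals of $\partial_{\nu}\phi$, I would assemble the sharp bound on $Q(\Sigma)$; the mass constant in \eqref{def:mass} emerges from these bulk integrals. Equality forces $\phi\kappa_{1}+\partial_{\nu}\phi=\phi\kappa_{2}+\partial_{\nu}\phi=0$ everywhere, so $\Sigma$ is totally geodesic in $\tilde g$; the Codazzi equations for $\tilde h\equiv 0$, combined with the graph representation and the explicit profile of $\phi(s)$, then force $f$ to be constant, so $\Sigma$ is a coordinate torus. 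The main obstacle is the weight-matching step: Gauss--Bonnet naturally produces $\int_{\Sigma}\kappa_{1}\kappa_{2}\,d\gamma$ with weight $1$, whereas the convexity inequality naturally produces $\int_{\Sigma}\phi^{2}\kappa_{1}\kappa_{2}\,d\gamma$; reconciling them requires a careful integration by parts that exploits the specific profile $\phi(s)=\cosh^{2/n}(ns/2)$ and the first-order identity $\phi'=\phi(1-\phi^{-n})^{1/2}$ in \eqref{equ:phi}, and it is this step that is most sensitive to the particular warped-product structure of the Horowitz--Myers geon.
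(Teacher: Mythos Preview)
Your proposal attempts a direct, static argument on the fixed surface $\Sigma$, whereas the paper proves the theorem by a geometric flow: one evolves $\Sigma$ by $\partial_t F=\phi^{-1}\nu$ (the unit normal flow in $\tilde g$), shows that $Q(\Sigma_t)$ is non-decreasing via $\frac{d}{dt}Q(\Sigma_t)=\int_{\Sigma_t}\bigl(-R_{\gamma_t}+2\phi^{-2}|D\phi|^2\bigr)\,d\mathrm{vol}_{\gamma_t}\geq 0$ (Gauss--Bonnet on $T^2$ kills the first term), uses $\tilde h\geq 0$ together with the non-positive sectional curvature of $\tilde g$ to show that $\tilde h\geq 0$ is preserved and hence the flow exists for all time, and finally proves $\limsup_{t\to\infty}Q(\Sigma_t)\leq -\tfrac{3m}{2}$ by decay estimates on the graph function. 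The convexity hypothesis is thus used \emph{dynamically}, to guarantee long-time existence, not as a one-shot pointwise inequality on $\Sigma$; the sharp constant $-\tfrac{3m}{2}$ comes from the limit at infinity, not from any algebraic bound on $\Sigma$ itself.

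Your ingredients are correct, but the gap you yourself flag is real and not a technicality. The weight mismatch between $\int_\Sigma\phi^2\kappa_1\kappa_2$ (from the product of the two eigenvalue inequalities) and $\int_\Sigma\kappa_1\kappa_2$ (from Gauss--Bonnet) is not removed by integration by parts alone. If you try the natural reweightings---for instance, divide the product inequality by $\phi^2$ and combine with Gauss--Bonnet rewritten via $\mathrm{Hess}\,\phi(\nu,\nu)=3\phi-\Delta_\Sigma\phi-H\partial_\nu\phi$---you obtain relations among $|\Sigma|$, $\int_\Sigma|D\phi|^2/\phi^2$, and $\int_\Sigma(\partial_\nu\phi)^2/\phi^2$, but nothing that bounds $Q(\Sigma)=\int_\Sigma(2\partial_\nu\phi-\phi H)$ against the sharp constant. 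Your rigidity step also overreaches: equality in $(\phi\kappa_1+\partial_\nu\phi)(\phi\kappa_2+\partial_\nu\phi)\geq 0$ forces only \emph{one} eigenvalue of $\tilde h$ to vanish at each point, not both, so you cannot conclude $\tilde h\equiv 0$. In the paper, rigidity follows because $\frac{d}{dt}Q(\Sigma_t)\equiv 0$ forces $D\phi\equiv 0$ on $\Sigma$, hence $\phi$ is constant there.
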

\begin{theorem}\label{thm:4d}
	Let $n=4$ and $\Sigma\subset M$ be a graph over $T^{3}$. Assume $\Sigma$ is symmetric along the $\xi$ direction.  Suppose that the second fundamental form $\bar{h}_{ab}$  of $\Sigma$ that corresponds to the conformal metric $\bar{g}=\left(\phi\frac{d\phi}{ds} \right)^2g$ is non-negative definite and that
	\begin{equation}\label{eqassumption}
	\min_{\Sigma}\phi^4\geq 1+\frac{2}{\sqrt{3}}.
	\end{equation}Then \eqref{mainresult} holds and the equality is achieved if and only if $\Sigma$ is given by a coordinate torus $s\equiv\textup{constant}$.
\end{theorem}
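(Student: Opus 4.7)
The plan is to mimic the approach of Theorem \ref{thm:3d}, reducing to a quasi-two-dimensional problem via the assumed $\xi$-symmetry and then applying a Minkowski-type identity after the conformal rescaling $\bar{g} = (\phi\phi')^{2}g$, which is adapted to the four-dimensional warping of the Horowitz-Myers geon.

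First, I would use the $\xi$-symmetry to write the graph function as $s = s(\theta^{3}, \theta^{4})$ independently of $\xi$, so $\Sigma \simeq S^{1}_{\xi} \times \Sigma_{0}$ for a $2$-surface $\Sigma_{0}$ in the quotient three-manifold. Integrating out $\xi$ expresses $Q(\Sigma)$ as the $\xi$-period times a functional on $\Sigma_{0}$ whose integrand carries the warping factor $\phi'$ explicitly.

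Second, I would perform the conformal rescaling $\bar{g} = \rho^{2}g$ with $\rho := \phi\phi'$. This choice is motivated by two identities special to $n = 4$: the algebraic relation $\phi^{4} = 1 + \rho^{2}$ and the ODE $\frac{d\rho}{ds} = 2\phi^{2}$, which together make $\rho$ a natural radial coordinate in which $\bar{g}$ assumes a clean warped-product form $\bar{g} = dr^{2} + A(r)\,d\xi^{2} + B(r)\,g_{T^{2}}$ after the substitution $r = \phi^{2}/2$. The standard conformal transformation laws
\begin{equation*}
\bar{H} = \rho^{-1}\left(H + (n-1)\nu(\log\rho)\right), \qquad d\bar{\gamma} = \rho^{n-1}\,d\gamma,
\end{equation*}
then allow me to rewrite the integrand $(n-1)\nu(\phi) - \phi H$ appearing in \eqref{def:Q2} as an expression involving $\bar{H}$, the function $\phi^{2}\phi'$, and terms depending only on $\phi$.

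Third, the non-negative definiteness of $\bar{h}$ furnishes a Heintze-Karcher-type inequality in $(M, \bar{g})$, accessible pointwise through an AM-GM bound on the principal $\bar{g}$-curvatures. The numerical bound $\min_{\Sigma}\phi^{4} \geq 1 + 2/\sqrt{3}$ is equivalent to $\rho^{4} \geq 4/3$ (since $\rho^{2} = \phi^{4} - 1$), which I expect to be exactly the discriminant threshold needed for a quadratic polynomial arising in the back-transformation to remain non-negative on $\Sigma$.

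The main obstacle I anticipate is precisely this third step: extracting a sharp pointwise inequality that closes the estimate once one accounts for the mixing between the two distinct warping factors $\phi'$ (in $d\xi^{2}$) and $\phi$ (in the $T^{2}$ factor) after the conformal rescaling. Rigidity would then follow by tracing equality cases --- saturation of both the convexity hypothesis and the Heintze-Karcher step forces $\bar{h} \equiv 0$ and $\Sigma$ to be a level set of $\rho$, hence a coordinate torus $\{s \equiv \mathrm{const}\}$.
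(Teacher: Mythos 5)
Your outline diverges completely from the paper's argument, which is a parabolic one: the theorem is proved by running the flow $\partial_t F=(\phi\,\tfrac{d\phi}{ds})^{-1}\nu$ (the unit normal flow for $\bar g$), showing via the Gauss--Bonnet theorem on the quotient $2$-torus $\mathring\Sigma_t=\Sigma_t\cap\{\xi=\mathrm{const}\}$ that $Q(\Sigma_t)$ is non-decreasing (Lemma \ref{lem:monotone}), proving long-time existence under the hypotheses (Lemma \ref{lem:T0_4D}), and computing $\limsup_{t\to\infty}Q(\Sigma_t)\le -2m$ from the asymptotics of the flow (Lemma \ref{lem:limitQ_4D}). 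Your algebraic identities are correct ($(\phi\phi')^2=\phi^4-1$ and $\tfrac{d}{ds}(\phi\phi')=2\phi^2$ for $n=4$), but the proposal has a genuine gap exactly where you flag it: the ``Heintze--Karcher-type inequality via AM--GM'' is never produced, and it is not at all clear one exists. A pointwise/static inequality of that type gives no mechanism for producing the sharp additive constant $-2m$, which in the paper arises globally as the limiting value of $Q$ on coordinate tori pushed to infinity (equation \eqref{limit}); without the flow (or an equivalent foliation/monotone-quantity argument) you have no way to see the mass enter. Your reading of the threshold $\min_\Sigma\phi^4\ge 1+\tfrac{2}{\sqrt 3}$ as a ``discriminant condition in the back-transformation'' is also not what happens: in the paper it is precisely the condition $3\phi^8-6\phi^4-1\ge 0$ making the sectional curvature component $\overline R_{qiqi}=-\phi^8(3-6\phi^{-4}-\phi^{-8})$ non-positive, which is what preserves $\bar h\ge 0$ along the flow and yields long-time existence (Lemma \ref{lem:C22_4D}).

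A second, concrete error is in your rigidity sketch. You claim equality forces $\bar h\equiv 0$, but coordinate tori --- which are exactly the equality configurations --- are \emph{not} totally geodesic for $\bar g$: by Lemma \ref{lem:ht}, for $u\equiv\mathrm{const}$ one has $\bar h_{ab}=e^{\psi}\bigl(h'_{ab}+\tfrac{d\psi}{dq}\gamma'_{ab}\bigr)$ with $\tfrac{d\psi}{dq}>0$, so $\bar h_{ij}\neq 0$. Any equality analysis must instead identify $\Sigma$ as a level set of $\phi$ (in the paper this comes from the vanishing of the gradient terms in $\tfrac{d}{dt}Q$), not as a $\bar g$-totally-geodesic hypersurface. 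As written, the proposal is a plausible but unsubstantiated strategy whose decisive step is missing and whose equality analysis is inconsistent with the statement being proved.
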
	
\begin{remark}
The assumption \eqref{eqassumption3D} in Theorem \ref{thm:3d} is to avoid the coordinate singularity at $\{s\equiv 0\}.$ The assumption \eqref{eqassumption} in Theorem \ref{thm:4d} is to ensure the flow \eqref{normalflow_4D} admits a long-time solution. As long as the solution of \eqref{normalflow_4D} exists for all time, the result of Theorem \ref{thm:4d} holds. See Lemma \ref{lem:limitQ_4D} for more detail.
\end{remark}
The structure of this article is as follows. In Section \ref{Sec:graphs} we discuss the geometry of graphs and prove the monotinicity of the $Q$ in \eqref{def:Q} under a weighted normal flow.  In Section \ref{Sec:3D}, we prove the global existence of the flow in dimensions 3 and prove  Theorem \ref{thm:3d}. Finally, in Section \ref{Sec:4D}, we prove the global existence of the flow in dimensions 4 for axially symmetric graphs and prove Theorem \ref{thm:4d}.
\section{geometry of graphs}\label{Sec:graphs}
In this section, we investigate the geometry of graph $\Sigma$ in $(M,g,\phi)$. First, we define the function 
\begin{align}\label{equ:q}
q(s):=\int_0 ^s \frac{ds'}{\phi(s')}.
\end{align}
Using $q$ as a coordinate, the conformal metric $g':=\phi^{-2}g$ is asymptotic to a flat metric on $\mathbb{R}\times T^{n-1}$ 
\begin{align}\label{def:gp}
g'=dq^2+\left(\phi^{-1}\frac{d\phi}{ds}  \right) ^2d\xi^2+ \sum\limits_{i=3}^n(d\theta^i)^2.
\end{align}
We will further consider other conformal metrics
\begin{equation}\label{def:gt}
\tilde{g}=\phi^2 g,\ \bar{g}=\left(\phi\frac{d\phi}{ds} \right)^2g.
\end{equation}
The flows we consider, \eqref{normalflow} and \eqref{normalflow_4D}, are unit normal flows in $\tilde{g}$ and $\bar{g}$ respectively. \\

We use $\nabla$ and $R_{\alpha\beta\lambda\mu}$ to denote the Levi-Civita connection and the Riemannian curvature tensor of $g$ respectively. For a hypersurface $\Sigma$, we use $\{x^a\}, a=1,2,\dots , n-1$ as a local coordinate. The induced metric and the second fundamental form are denoted by $\gamma_{ab}$ and $h_{ab}$ respectively. We write $D$ for the Levi-Civita connection of $\gamma$. We use $d\textup{vol}_g$ and $d\textup{vol}_\gamma$ to denote the volume form of $g$ and $\gamma$ respectively. The tensors and connections induced by the conformal metrics $g',\tilde{g}$ or $\bar{g}$ will carry the corresponding accents. For instance, the second fundamental form  with respect to $g'$ is written as $h'_{ab}$. Also, the index in Weingarten tensor is raised accordingly. For example, $\tilde{h}_a^b=\tilde{\gamma}^{bc}\tilde{h}_{ca}$.\\

In the Appendix \ref{sec:app1}, we calculate the curvatures and the second fundamental forms with respect to the metric
$$dq^2+\Psi(q)^2d\xi^2+\sum_{i=3}^n (d\theta^i)^2,$$
for a general positive function $\Psi(q)$. The metric $g'$ corresponds to $\Psi=\phi^{-1}\frac{d\phi}{ds}$, which is the only case we will use. We further record the curvatures and the second fundamental forms under a conformal transformation $\check{g}=e^{2\psi}g'$. The metrics $g$, $\tilde{g}$ and $\bar{g}$ correspond to $\psi=\log\phi$, $ 2\log\phi$ and $ 2\log\phi+\log \frac{d\phi}{ds}$ respectively. \hfill\\

%We write $R_{\alpha\beta\lambda\mu}$, $R'_{\alpha\beta\lambda\mu}$, $\widetilde{R}_{\alpha\beta\lambda\mu}$  and $\overline{R}_{\alpha\beta\lambda\mu}$ for the Riemannian curvature tensors for $g$, $g'$, $\tilde{g}$ and $\bar{g}$ respectively. We use $\nabla$ and $\nabla'$ to denote the Levi-Civita connection of $g$ and $g'$ respectively.  We use $\{x^a\}, a=1,2,\dots , n-1$ as a local coordinate for $\Sigma$. Denote by $\gamma_{ab}$, $\gamma'_{ab}$, $\tilde{\gamma}_{ab}$ and $\bar{\gamma}_{ab}$ the induced metric on $\Sigma$ from $g$, $g'$, $\tilde{g}$ and $\bar{g}$ respectively. Also, we write $h_{ab}$, $h'_{ab}$, $\tilde{h}_{ab}$ and $\bar{h}_{ab}$ for the second fundamental form of $\Sigma$ with respect to corresponding metrics. Also, the Weingarten tensors $h_{b}^a$, $(h')_{b}^a$ and $(\tilde{h})_{b}^a$ are raised accordingly. Furthermore, $D$ and $D'$ will denote the Levi-Civita connection of $\gamma$ and  $\gamma'_{ab}$ respectively. The explicit forms of $R$ and $h$ can be found in the Appendix \ref{sec:app1}.

Let $\Sigma$ be a graph given by $s=v(\xi,\theta^i)$ for some smooth function $v$ defined on $T^{n-1}$. Define the height function in the $q$ coordinate as $$u(\xi,\theta^i)=q(  v(\xi,\theta^i) ).$$ Define the slope of $\Sigma$ by
\begin{align}\label{v}
\rho^2= 1+\left(\phi \bigg/  \frac{d\phi}{ds}   \right)^2\left(\frac{\partial u}{\partial\xi} \right)^2+\delta^{ij}\frac{\partial u}{\partial\theta^i}\frac{\partial u}{\partial\theta^j}. 
\end{align} 
Let $H$ be the mean curvature of $\Sigma$ with respect to $g$. A direct computation using \eqref{H-gt} with $\Psi=\phi^{-1}\frac{d\phi }{ds}$, $\psi=\log\phi$ and \eqref{equ:phi} shows 
\begin{align*}
H=  -\rho\phi^{-1}\Delta' u+(2^{-1}n)\rho^{-1} \phi^{1-n} \left(  \frac{d\phi}{ds} \right)^{-1}\delta^{ij} \frac{\partial u}{\partial\theta^i}\frac{\partial u}{\partial\theta^j} +(n-1)\rho^{-1}\phi^{-1} \frac{d \phi}{ ds	}  +(2^{-1}n)\rho^{-1} \phi^{1-n} \left( \frac{d\phi}{ds} \right)^{-1}.     
\end{align*}
Here $\Delta'$ is the Laplace-Beltrami operator of $\gamma'$. Together with $$d\textup{vol}_\gamma= \rho\phi^{n-2}\frac{d\phi}{ ds} \, d\xi\wedge d\theta^3\wedge\dots \wedge d\theta^n,$$
we derive from \eqref{def:Q2} that
\begin{equation}\label{limit}
\begin{split}
&Q(\Sigma)+ 2^{-1}nm = \int_{T^{n-1} }  \rho^2\phi^{n-2}\frac{d\phi}{ds}\Delta' u-(2^{-1}n)\delta^{ij}\frac{\partial u}{\partial\theta^i}\frac{\partial u}{\partial\theta^j} \, d\xi\wedge d\theta^3\wedge\dots \wedge d\theta^n.
\end{split}
\end{equation}

Let $p$ be a function defined on $M$ to be determined. Let $F:[0,T_0)\times T^{n-1}\to M$ be a family of embeddings that satisfies 
\begin{equation}\label{normalflow_Both}
\frac{\partial F}{\partial t} =p \nu,
\end{equation}
where $\nu$ is the unit outward normal. Denote by $\Sigma_t$ the image of $F(t,\cdot)$ and by $\gamma_t$ the induced metric. 

\begin{lemma}\label{lem:monotone}
	\hfill
	\begin{itemize}
		\item For $n=3$, by taking $p=\phi^{-1}$, $Q(\Sigma_t)$ is monotone non-decreasing along the flow \eqref{normalflow_Both}.
		\item For $n=4$, assume $\Sigma_0$ is symmetric along the $\xi$ direction. Then by taking $p=(\phi\frac{d\phi}{ds})^{-1}$, $Q(\Sigma_t)$ is monotone non-decreasing along the flow \eqref{normalflow_Both}. 
	\end{itemize}
	 Furthermore, in the two cases above, $Q(\Sigma_t)$ is strictly increasing unless $\Sigma_0$ is a coordinate torus. 
\end{lemma}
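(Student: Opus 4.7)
The plan is to compute $\frac{d}{dt}Q(\Sigma_t)$ directly along \eqref{normalflow_Both}, use the static equation \eqref{static} to eliminate Ricci terms, and then reduce the resulting integrand---via a conformal change of metric, the Gauss equation, and Gauss-Bonnet on the boundary torus---to a manifestly non-negative quantity.

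First, combining the standard variation formulas $\partial_t\,d\textup{vol}_\gamma = pH\,d\textup{vol}_\gamma$, $\partial_t H = -\Delta_\Sigma p - p(|h|^2 + \textup{Ric}(\nu,\nu))$, $\partial_t(\phi|_{\Sigma_t}) = p\,\phi_\nu$ with $\phi_\nu := g(\nabla\phi,\nu)$, together with $\partial_t\!\int_{\Omega_t}\phi\,d\textup{vol}_g = \int_{\Sigma_t}p\phi\,d\textup{vol}_\gamma$, I differentiate $Q$. Integrating $\int_\Sigma\phi\,\Delta_\Sigma p$ by parts and applying the scalar and tensor consequences of \eqref{static}---namely $\Delta_g\phi = n\phi$ and $\textup{Hess}_g\phi = n\phi\,g + \phi\,\textup{Ric}_g$, so that $\Delta_\Sigma\phi = -\phi\,\textup{Ric}(\nu,\nu) - H\phi_\nu$---the Ricci contributions cancel and one lands on the clean identity
\[
\frac{d}{dt}Q(\Sigma_t) = \int_{\Sigma_t} p\phi\left[\,n(n-1) + |h|^2 - H^2 - 2H\phi^{-1}\phi_\nu\,\right]d\textup{vol}_\gamma.
\]

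For $n=3$ the choice $p = \phi^{-1}$ makes \eqref{normalflow_Both} a unit-normal flow for $\tilde g = \phi^2 g$. Passing to $\tilde g$-quantities via the conformal-change formulas of the Appendix (with $\psi = \log\phi$), a direct substitution gives $[|h|^2 - H^2 - 2H\phi^{-1}\phi_\nu]\,d\textup{vol}_\gamma = [|\tilde h|^2 - \tilde H^2 + 2\phi^{-4}\phi_\nu^2]\,d\textup{vol}_{\tilde\gamma}$, so the integrand rewrites as $[6\phi^{-2} + |\tilde h|^2 - \tilde H^2 + 2\phi^{-4}\phi_\nu^2]$. The Gauss equation for $\Sigma\subset(M,\tilde g)$ gives $|\tilde h|^2 - \tilde H^2 = \tilde R_M - 2\widetilde{\textup{Ric}}(\tilde\nu,\tilde\nu) - \tilde R_\Sigma$, and since $\Sigma\cong T^2$ the intrinsic term is killed by Gauss-Bonnet. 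The conformal-change formulas applied to $\textup{Ric}_g$ and $R_g = -n(n-1)$, together with the static identities, yield the explicit values $\tilde R_M = -2\phi^{-2}(8 + \phi^{-3})$ and $\widetilde{\textup{Ric}}(\tilde\nu,\tilde\nu) = -6\phi^{-2} + 2\phi^{-4}\phi_\nu^2$, collapsing the integrand to $2\phi^{-2}(1 - \phi^{-3} - \phi^{-2}\phi_\nu^2)$. Cauchy-Schwarz and \eqref{equ:phi} give $\phi^{-2}\phi_\nu^2 \leq \phi^{-2}|\nabla\phi|^2 = 1 - \phi^{-3}$, so the integrand is pointwise non-negative, with equality iff $\nabla\phi\parallel\nu$. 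Since $\phi=\phi(s)$, equality forces $\Sigma$ to be a coordinate torus, giving strict monotonicity in all other cases.

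For $n=4$ the choice $p = (\phi\,d\phi/ds)^{-1}$ makes \eqref{normalflow_Both} a unit-normal flow for $\bar g = (\phi\phi')^2 g$. The analogous manipulation using the conformal-change formulas with $\psi = 2\log\phi + \log\phi'$ expresses the integrand in $\bar g$-quantities. A three-dimensional $\Sigma$ does not admit Gauss-Bonnet directly, so the $\xi$-symmetry is used essentially: $\partial_\xi$ is Killing on $M$ and tangent to $\Sigma$, hence a principal direction of $\bar h$ whose principal curvature is explicit in terms of $\phi,\phi',\phi''$ and $\nu_s := g(\partial_s,\nu)$. Integrating out the $\xi$-fiber reduces the integral to one over the quotient torus $\hat\Sigma\cong T^2$, to which Gauss-Bonnet does apply, and the $n=4$ specializations of the static identities and of \eqref{equ:phi} are then used to exhibit the residual curvature combination as manifestly non-negative, with equality again occurring precisely when $\nabla\phi\parallel\nu$.

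The main obstacle is the $n=4$ step. The $n=3$ argument is essentially algebraic once Gauss-Bonnet has been invoked, but in $n=4$ the dimensional reduction via the Killing $\partial_\xi$ must be tracked precisely---including the way the $\partial_\xi$-principal curvature of $\bar h$ feeds back through the conformal change---and verifying that the leftover combination on the 2D quotient is pointwise non-negative requires identities peculiar to the $n=4$ Horowitz-Myers geon and to the choice of $\bar g$ as the relevant conformal metric.
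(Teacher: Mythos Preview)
Your $n=3$ computation is correct, but you take a detour through the conformal metric $\tilde g$ that the paper avoids. After the same variation step, the paper does \emph{not} pass $\int\phi\Delta_\Sigma p$ over to $\int p\Delta_\Sigma\phi$; instead it keeps both Laplacians, applies the Gauss equation $n(n-1)+|h|^2-H^2=-R_{\gamma_t}-2\operatorname{Ric}(\nu,\nu)$, and uses the static identity $\Delta_{\gamma_t}\phi=-\phi\operatorname{Ric}(\nu,\nu)-H\phi_\nu$ to arrive at the purely intrinsic formula
\[
\frac{d}{dt}Q(\Sigma_t)=\int_{\Sigma_t}\bigl(-p\phi\,R_{\gamma_t}-2\gamma_t(D\phi,Dp)\bigr)\,d\textup{vol}_{\gamma_t}.
\]
For $n=3$, $p=\phi^{-1}$, this is $\int(-R_{\gamma_t}+2\phi^{-2}|D\phi|^2)$; Gauss--Bonnet kills the first term and the second is visibly $\ge 0$, with equality iff $\phi$ is constant on $\Sigma_t$. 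No ambient curvature of $\tilde g$ is ever computed. Your final integrand $2\phi^{-2}(1-\phi^{-3}-\phi^{-2}\phi_\nu^2)\,d\textup{vol}_{\tilde\gamma}$ is exactly $2\phi^{-2}|D\phi|^2\,d\textup{vol}_\gamma$, so you reached the same endpoint by a longer path.

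For $n=4$ your proposal is only a sketch, and the route through $\bar g$ and a ``$\partial_\xi$-principal curvature'' would be laborious. The paper's intrinsic formula above makes the $n=4$ step short: the $\xi$-symmetry gives $\gamma_t=\mathring\gamma_t+(d\phi/ds)^2d\xi^2$, hence the warped-product identity $R_{\gamma_t}=R_{\mathring\gamma_t}-2(d\phi/ds)^{-1}\Delta_{\mathring\gamma_t}(d\phi/ds)$. Inserting this and $p=(\phi\,d\phi/ds)^{-1}$, integrating out the $\xi$-fibre, integrating by parts on the $2$-torus $\mathring\Sigma_t$, and applying Gauss--Bonnet to $R_{\mathring\gamma_t}$, one is left with an explicit ODE coefficient times $|Dv_t|^2$; using \eqref{equ:phi} that coefficient is $(1-\phi^{-4})^{-1}(6+2\phi^{-8})\ge 0$. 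The conformal metric $\bar g$ plays no role in this lemma---it enters only later, in the long-time existence argument.
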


\begin{proof}
	We compute
	\begin{equation}
	\frac{d}{dt}Q(\Sigma_t )=n(n-1)\int_{\Sigma_t}p\phi\, d\textup{vol}_{\gamma_t} -\int_{\Sigma_t}\left(\frac{\partial\phi}{\partial t}  H+p\phi H^2  +  \phi\frac{\partial H}{\partial t} \right)\, d\textup{vol}_{\gamma_t}.
	\end{equation}
	Using the first variation of area and the Gauss equation,
	\begin{equation}
	\frac{\partial H}{\partial t} =-\Delta_{\gamma_t}p-p|h|^2-p \text{Ric}(\nu,\nu),  
	\end{equation}
	\begin{equation}
	n(n-1)-H^2+|h|^2=-R_{\gamma_t}-2\text{Ric}(\nu,\nu).
	\end{equation} Hence 
	\begin{equation}
	\frac{d}{dt}Q(\Sigma_t)=\int_{\Sigma_t} p\phi \left[-R_{\gamma_t}-2\text{Ric}(\nu,\nu)\right]-p Hg( \nabla \phi,\nu)+p\phi\text{Ric}(\nu,\nu)+\phi \Delta_{\gamma_t}p  \, d\textup{vol}_{\gamma_t}.
	\end{equation}
	Using the static equation \eqref{static},
	\begin{equation}
	\Delta_{\gamma_t}\phi=\Delta_{g}\phi-\nabla^2\phi(\nu,\nu)-H g(\nabla \phi,\nu)=-\phi\text{Ric}(\nu,\nu)-Hg(\nabla \phi,\nu),
	\end{equation} 
	we obtain
	\begin{equation}
	\begin{split}
	\frac{d}{dt}Q(\Sigma_t)&=\int_{\Sigma_t} -p \phi R_{\gamma_t}+p \Delta_{\gamma_t}\phi+\phi \Delta_{\gamma_t}p \, d\textup{vol}_{\gamma_t} \\
	&=\int_{\Sigma_t} -p\phi R_{\gamma_t}-2\gamma_t(D\phi ,Dp) \, d\textup{vol}_{\gamma_t}.
	\end{split}
	\end{equation}
	
	For $n=3$, by choosing $p=\phi^{-1}$, we obtain from the Gauss-Bonnet Theorem that 
	\begin{equation}
	\frac{d}{dt}Q(\Sigma_t)=\int_{\Sigma_t} -R_{\gamma_t}+2\phi^{-2}\gamma_t (D\phi,D\phi) \, d\textup{vol}_{\gamma_t} \geq 0.
	\end{equation}
 Moreover, $\frac{d}{dt}Q(\Sigma_t)=0$ if and only if $\phi$ is a constant on $\Sigma_t$. This implies $\Sigma_t$ is a coordinate torus. By reversing the flow \eqref{normalflow_Both}, $\Sigma_0$ is also a coordinate torus.\\

	For $n=4$, we assume that $\Sigma_0$ is symmetric along the $\xi$ direction. Hence $\Sigma_t$ preserves the same symmetry. Denote by $\mathring{\gamma}_t$ the induced metric on $\mathring{\Sigma}_t=\Sigma_t\cap\{\xi\equiv\textup{constant}\}$. Then
	$$\gamma_t=\mathring{\gamma}_t+\left(\frac{d\phi}{ds} \right)^2 d\xi^2.$$ 
	The scalar curvature of $\gamma_t$ and $\mathring{\gamma}_t$ are related through $$R_{\gamma_t}=R_{\mathring{\gamma}_t}-2\left(\frac{d\phi}{ds} \right)^{-1}\Delta_{\mathring{\gamma}_t}\left(\frac{d\phi}{ds} \right).$$ Assuming $p$ is also symmetric along the $\xi$ direction, we get
	$$\frac{d}{dt}Q(\Sigma_t)=\frac{4\pi}{n} \int_{\mathring{\Sigma}_t} -p\phi \frac{d\phi}{ds}     R_{\mathring{\gamma}_t}+2p\phi    \Delta_{\mathring{\gamma}_t}\left(\frac{d\phi}{ds} \right) -2 \frac{d\phi}{ds}    \mathring{\gamma}_t(D\phi ,Dp) \, d\textup{vol}_{\mathring{\gamma}_t}.$$ Taking $p=\left(\phi\frac{d\phi}{ds} \right)^{-1}$, we deduce from the Gauss-Bonnet Theorem that
	\begin{align*}
	\frac{d}{dt}Q(\Sigma_t)=&\frac{4\pi}{n} \int_{\mathring{\Sigma}_t} -R_{\mathring{\gamma}_t}+2\left( \frac{d\phi}{ds} \right)^{-1}\Delta_{\mathring{\gamma}_t}\left( \frac{d\phi}{ds} \right)-2  \frac{d\phi}{ds}   \mathring{\gamma}_t \left(D\phi ,D\left(\phi\frac{d\phi}{ds} \right)^{-1}\right)    \, d\textup{vol}_{\mathring{\gamma}_t}\\
	= &\frac{4\pi}{n} \int_{\mathring{\Sigma}_t} \left( -2\frac{d}{ds}\left( \frac{d\phi}{ds} \right)^{-1}   \frac{d^2\phi}{ds^2}  -2\left( \frac{d\phi}{ds} \right)^2 \frac{d}{ds}\left(\phi \frac{d\phi}{ds} \right)^{-1} \right)\cdot \mathring{\gamma}_t(Dv_t,Dv_t)     \, d\textup{vol}_{\mathring{\gamma}_t}.
	\end{align*}   
%	\begin{equation}\label{equ:phi}
%	\frac{d\phi}{ ds} =\phi (1-\phi^{-n})^{1/2} 
%	\end{equation}
From \eqref{equ:phi} and $n=4$, we derive
	\begin{align*}
	-2\frac{d}{ds}\left( \frac{d\phi}{ds} \right)^{-1}\frac{d^2\phi}{ds^2}-2\left( \frac{d\phi}{ds} \right)^2 \frac{d}{ds}\left(\phi \frac{d\phi}{ds} \right)^{-1} =(1-\phi^{-4})^{-1}(6+2\phi^{-8})\geq 0.
	\end{align*}
Thus $Q(\Sigma_t)$ is monotone non-decreasing along the flow. Using the same argument for the $3$ dimensional case, $\frac{d}{dt}Q(\Sigma_t)=0$ if and only if $\Sigma_0$ is a coordinate sphere. 
\end{proof}

\section{Three Dimensional Case}\label{Sec:3D}
In this section, we fix $n=3$ and prove Theorem \ref{thm:3d}. Recall that $F:[0,T_0)\times T^{n-1}\to M$ solves
\begin{equation}\label{normalflow}
\frac{\partial F}{\partial t} =\phi^{-1} \nu.
\end{equation} 
Here we take $T_0\in (0,\infty]$ to be the largest number such that \eqref{normalflow} has a smooth graphical solution in $t\in[0,T_0).$\\

Equation \eqref{normalflow} can be viewed as the unit normal flow with respect to the conformal metric $\tilde{g}=\phi^2g $ . Recall that $\tilde{h}_{ab}$ is the the second fundamental form with respect to $\tilde{g}$. Our main assumption is  
\begin{align}\label{convex}
\tilde{h}_{ab}\ \textup{is non-negative definite for } \Sigma_0\ \textup{and}\ \min_{\Sigma_0}\phi > 1.
\end{align} 
We list two lemmas which allow us to prove Theorem \ref{thm:3d}. 
\begin{lemma}\label{lem:limitQ_3D}
	Suppose $T_0=\infty$. Then
	\begin{align}\label{limitQ_3D}
	\limsup_{t\to\infty}Q(\Sigma_t)\leq -\frac{3m}{2}.
	\end{align} 
\end{lemma}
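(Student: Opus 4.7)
The plan is to show that under the flow, $\Sigma_t$ approaches, with sufficient regularity, a coordinate torus at infinity—where $Q$ equals $-\tfrac{3m}{2}$—so that the monotone quantity $Q(\Sigma_t)$ cannot exceed this value in the limit. Parametrizing $\Sigma_t$ as $q=u(\xi,\theta,t)$ over $T^2$ and using the expressions for the outward normal from Section~\ref{Sec:graphs}, the flow $F_t=\phi^{-1}\nu$ yields the scalar PDE
\[
u_t=\frac{\rho}{\phi^2},
\]
with $\phi=\phi(v(u))$ and $\rho$ as in \eqref{v}. Because $\phi(s)\sim e^s$, $q_\infty:=\int_0^\infty\phi^{-1}\,ds<\infty$ and $u\in[0,q_\infty)$. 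The maximum principle at the spatial extrema of $u$ forces $\min u,\max u\nearrow q_\infty$, and an ODE comparison (noting $\phi(v(u))^2\sim (q_\infty-u)^{-2}$ near $q_\infty$) yields the rate $u_{\max}-u_{\min}=O(t^{-2})$ with $\min_{\Sigma_t}\phi\sim t$.

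Next, I would promote this $C^0$ decay to $C^2$ decay. Standard parabolic regularity for the quasilinear equation $u_t=\rho/\phi^2$, combined with the derivative estimates stemming from the convexity assumption $\tilde h_{ab}\ge 0$ as in the global existence analysis, gives $|Du|,|D^2u|=O(t^{-2})$ uniformly. Substituting into \eqref{limit} and integrating the $\Delta' u$-term by parts in $(\xi,\theta)$ using $\partial_\xi \phi=\phi\phi'u_\xi$ and $\sqrt{\det\gamma'}=\Psi\rho$ with $\Psi=\phi^{-1}\phi'$, one obtains
\begin{align*}
Q(\Sigma_t)+\tfrac{3m}{2} =\; & -\int_{T^2}\phi^2\Big[\tfrac{\rho_\xi u_\xi}{\Psi\rho}+\tfrac{\rho_\theta u_\theta\Psi}{\rho}\Big]d\xi\,d\theta\\
& -2\int_{T^2}\phi^2\phi'\Psi(\rho^2-1)\,d\xi\,d\theta-\tfrac{3}{2}\int_{T^2}u_\theta^2\,d\xi\,d\theta.
\end{align*}
The last two integrals are manifestly non-positive. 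The first has integrand of size $\phi^2\,|Du|\,|D^2u|=O(t^{-2})$, so it too vanishes as $t\to\infty$. Therefore $\limsup_{t\to\infty}\big(Q(\Sigma_t)+\tfrac{3m}{2}\big)\le 0$, which is \eqref{limitQ_3D}.

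The main obstacle is the sharp rate-balance in the above estimate. The weights $\phi^2$ and $\phi^2\phi'\sim\phi^3$ blow up along the flow, so pointwise decay of $|Du|$ and $|D^2u|$ must strictly beat them. Upgrading the $C^0$ oscillation decay $u_{\max}-u_{\min}=O(t^{-2})$ to pointwise $C^2$ decay at the same rate is not automatic—narrow spikes could in principle spoil the pointwise bound—and the argument relies essentially on leveraging the preserved convexity $\tilde h_{ab}\ge 0$, together with parabolic Schauder-type estimates for $u_t=\rho/\phi^2$, to provide the Harnack-type coupling between oscillation and derivatives needed to close the estimate.
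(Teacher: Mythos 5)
Your overall strategy coincides with the paper's: flow to infinity, use the identity \eqref{limit} to write $Q(\Sigma_t)+\tfrac{3m}{2}$ as an error integral, and show that after an integration by parts the remaining terms are either sign-definite or decay. The $C^0$ and gradient decay you describe are essentially Lemmas \ref{lem:C00} and \ref{lem:C11}. The gap is in how you control second derivatives. You invoke ``standard parabolic regularity'' and ``parabolic Schauder-type estimates'' for $u_t=\rho/\phi^2$, but this equation is \emph{first order} in space: the speed $\phi^{-1}$ depends only on position, not on curvature, so the graph equation is a Hamilton--Jacobi/transport equation with no parabolic smoothing whatsoever. There is no regularity theory that upgrades $C^0$ or $C^1$ decay to pointwise $C^2$ decay here; indeed, absent the convexity hypothesis such flows can lose $C^2$ control in finite time, which is exactly why the paper needs assumption \eqref{convex} at all. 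So the step ``$|D^2u|=O(t^{-2})$ uniformly'' is unsupported, and your final estimate of the first integral (whose integrand contains $\rho_\xi,\rho_\theta$, i.e.\ second derivatives of $u$ weighted by $\phi^2\sim t^2$) does not close.

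The paper's resolution is different and worth noting: it never proves a two-sided pointwise Hessian bound. Instead, Lemma \ref{lem:C44} analyzes the Riccati-type evolution of the second fundamental form $(h')^a_b$ in the conformal metric $g'$ and obtains only a \emph{one-sided} bound $D'_b(D')^au_t\ge -C(t-C)^{-2}\log t$. This is then converted into the integral bound
\begin{equation*}
\int_{\Sigma_t}|\Delta'u_t|\,d\textup{vol}_{\gamma'_t}=2\int_{\Sigma_t}(\Delta'u_t)_-\,d\textup{vol}_{\gamma'_t}\le C(t-C)^{-2}\log t,
\end{equation*}
using the fact that $\Delta'u_t$ integrates to zero against $d\textup{vol}_{\gamma'_t}$. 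Combined with $|\rho-1|\le Ct^{-4}$ and $\phi^2\le Ct^2$, this kills the troublesome term, while the remaining term $-\int 2\phi\frac{d\phi}{ds}|D'u_t|^2_{\gamma'_t}$ is non-positive with no decay needed. To repair your argument you would need to replace the appeal to parabolic regularity by this (or an equivalent) curvature-evolution argument; the convexity assumption enters precisely there, not through any Harnack or Schauder mechanism.
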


\begin{lemma}\label{lem:T0_3D}
	Under the assumption \eqref{convex}, we have $T_0=\infty$.
\end{lemma}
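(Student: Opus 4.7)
The plan is a continuity argument: assuming $T_0 < \infty$, I would derive uniform a priori estimates for $\Sigma_t$ on $[0,T_0)$ and invoke standard parabolic continuation to contradict the maximality of $T_0$. Since \eqref{normalflow} is the unit normal flow with respect to $\tilde g = \phi^2 g$ and $\Sigma_0$ is graphical, $\Sigma_t$ remains the graph of some $v(t,\xi,\theta)$ satisfying a quasilinear strictly parabolic scalar PDE. The obstructions to long-time existence are three: (a) $\Sigma_t$ approaches the central torus $\{\phi=1\}$ where the $\xi$-circle coordinate degenerates; (b) the slope $\rho$ in \eqref{v} blows up and graphicality is lost; (c) the second fundamental form blows up.

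To prevent (a), first I would show $\min_{\Sigma_t}\phi$ is non-decreasing. Since $\nabla\phi=\phi(1-\phi^{-n})^{1/2}\partial_s$ is parallel to $\partial_s$, at a point where $\phi\big|_{\Sigma_t}$ attains its minimum the tangential gradient $D^{\Sigma_t}\phi$ vanishes, which forces $\nu\parallel\partial_s$. Because $\Sigma_t$ is an outward graph we have $\nu^s>0$, so $g(\nabla\phi,\nu) = |\nabla\phi|_g \geq 0$ at the minimum. The envelope theorem then gives
\[
\frac{d}{dt}\min_{\Sigma_t}\phi \;=\; \phi^{-1}\,g(\nabla\phi,\nu)\big|_{\min}\;\geq\; 0,
\]
so $\min_{\Sigma_t}\phi\geq \min_{\Sigma_0}\phi>1$ throughout $[0,T_0)$ and the flow stays in the smooth part of the geon.

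For the main step, I would preserve $\tilde h_{ab}\geq 0$ along the flow in order to control both slope and curvature simultaneously. Because \eqref{normalflow} is the unit normal flow in $\tilde g$, the shape operator satisfies a Riccati-type ODE along normal geodesics of the form
\[
\partial_t \tilde h^a_b \;=\; -\tilde h^a_c \tilde h^c_b \;-\; \tilde R^a{}_{\tilde\nu b \tilde\nu}.
\]
At a spacetime point where a null eigenvector $X$ of $\tilde h^a_b$ first appears, the preservation of non-negativity reduces by Riccati/Hamilton comparison to verifying $\tilde R(X,\tilde\nu,X,\tilde\nu)\leq 0$, i.e.\ non-positivity of the relevant $\tilde g$-sectional curvature. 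Using the conformal curvature formulas of Appendix \ref{sec:app1} --- applied to $g'=\phi^{-2}g$ with $\Psi=\phi^{-1}d\phi/ds$ and then to $\tilde g=e^{2\psi}g'$ with $\psi=2\log\phi$ --- together with the ODE \eqref{equ:phi}, one checks that this sectional curvature is non-positive on the region $\{\phi>1\}$, which by the previous step is exactly where $\Sigma_t$ lives. Preservation of $\tilde h\geq 0$ then yields an a priori bound on $\rho$ via a barrier comparison with the $\tilde g$-convex coordinate tori $\{s\equiv\textup{const.}\}$, as well as a one-sided bound on $h$; the complementary upper bound on $|h|$ follows by combining this with the monotonicity of $Q(\Sigma_t)$ from Lemma \ref{lem:monotone} and standard interpolation.

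With uniform $C^2$ bounds in hand, the scalar graph PDE is uniformly parabolic; Krylov--Safonov and Schauder estimates then give uniform $C^{k,\alpha}$ bounds for every $k\geq 2$, and short-time existence extends the solution past $T_0$, contradicting the maximality of $T_0$. The main obstacle is the curvature sign verification in the main step: the sectional curvatures of $\tilde g$ on the Horowitz--Myers geon do not have a definite sign a priori, and the preservation of $\tilde h\geq 0$ relies crucially on the explicit structure of $\phi(s)$ via \eqref{equ:phi} together with the assumption $\phi>1$---without it the sign could fail near the central torus. A secondary technical point is converting one-sided $\tilde h\geq 0$ into the two-sided $|h|$-bound needed for full $C^2$ regularity, where the monotone $Q$-quantity that the flow was designed to increase re-enters the estimates.
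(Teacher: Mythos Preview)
Your proposal has a fundamental misidentification of the equation type. The flow \eqref{normalflow} is the \emph{unit} normal flow in $\tilde g$: each point travels along the $\tilde g$-geodesic normal to $\Sigma_0$ at unit speed. In graph form, \eqref{grapheq3D} reads $\partial_t v_t=\rho\,\phi(v_t)^{-1}$, and $\rho$ depends only on $v_t$ and its \emph{first} spatial derivatives. This is a first-order (Hamilton--Jacobi type) PDE, not a quasilinear parabolic one. Consequently, Krylov--Safonov and Schauder estimates are not available, and your bootstrap from $C^2$ to $C^{k,\alpha}$ has no mechanism. The paper instead obtains all higher estimates by differentiating the Riccati-type ODE \eqref{equ:ht} for $\tilde h$ along the flow: one shows inductively that $\omega_j(t)=\max_{\Sigma_t}|\widetilde D^j\tilde h|_{\tilde\gamma}^2$ satisfies $\frac{d}{dt}\omega_j\le C\omega_j+C$ (using Lemma~\ref{lem:nablaR} to control the curvature term), and ODE comparison closes the induction. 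No spatial smoothing is invoked because none is present.

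There are two secondary gaps. First, your slope bound ``via barrier comparison with coordinate tori'' is vague; convexity of $\Sigma_t$ in $\tilde g$ does not by itself bound $\rho$. The paper obtains the $C^1$ estimate independently of the convexity assumption, by computing $\partial_t(\rho^2-1)$ directly and applying the maximum principle (Lemma~\ref{lem:C11}). Second, your route to the upper bound on $|h|$---monotonicity of the integral quantity $Q(\Sigma_t)$ plus ``standard interpolation''---does not yield pointwise curvature bounds. The paper gets the two-sided bound on $\tilde h$ entirely from the Riccati equation: non-positivity of the $\tilde g$-sectional curvature (which you correctly flag) preserves $\tilde h\ge 0$, and then the same equation $\partial_t\tilde h^a_b=-\tilde h^a_c\tilde h^c_b-\tilde K^a_b$ with $\tilde h\ge 0$ and $|\tilde K|\le C(t)$ gives the upper bound on the largest eigenvalue by ODE comparison (Lemma~\ref{lem:C22}). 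Your identification of the key sign computation is right, but the subsequent argument needs to be reorganized around the ODE character of the flow rather than parabolic regularity.
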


\begin{proof}[Proof of Theorem \ref{thm:3d}]
	Let $\Sigma_t$ be the solution of \eqref{normalflow} starting from $\Sigma_0=\Sigma$. Combining Lemmas \ref{lem:monotone}, \ref{lem:T0_3D} and \ref{lem:limitQ_3D},
	\begin{align*}
	Q(\Sigma)\leq \limsup_{t\to\infty}Q(\Sigma_t)\leq -\frac{3m}{2}.
	\end{align*}
	Furthermore, the equality implies $Q(\Sigma_t)$ is a constant along the flow. And Lemma \ref{lem:monotone} implies $\Sigma$ is a coordinate torus.
\end{proof}
We prove Lemmas \ref{lem:limitQ_3D} and Lemma \ref{lem:T0_3D} in the rest of this section. We adapt the convention that $C$ denotes a large constant depending on $\Sigma_0$. The value of $C$ may change from line to line. \\

Denote by $v_t(\xi,\theta)$ the height function of $\Sigma_t$ and $u_t=q(v_t)$. Then $v_t$ solves the equation
\begin{equation}\label{grapheq3D}
\frac{\partial v_t}{\partial t}=\rho \phi(v_t)^{-1}.
\end{equation} 
Here $\rho$ is the slope of $\Sigma_t$ defined in \eqref{v}. We start with the $C^0$ and $C^1$ estimates for $v_t$ and $u_t$. 
\begin{lemma}\label{lem:C00}
	There exists a constant $C $ depending on $\Sigma_0$ such that for all $t\in [0,T_0)$,
	\begin{equation}\label{C00}
	\begin{split}
	|\phi(v_t)-t|\leq& C ,\ \phi(v_t)\geq  1+C^{-1}(t+1).
	\end{split}
	\end{equation} 
\end{lemma}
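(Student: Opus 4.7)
The plan is to exploit the flow equation \eqref{grapheq3D} to derive an evolution equation for $\phi(v_t)$ along the flow, then compare it with a one-dimensional ODE. Chain-ruling and using $\frac{d\phi}{ds}=\phi\sqrt{1-\phi^{-3}}$ from \eqref{equ:phi} (with $n=3$) gives
\[
\frac{\partial}{\partial t}\phi(v_t)=\frac{d\phi}{ds}(v_t)\,\frac{\partial v_t}{\partial t}=\rho\sqrt{1-\phi(v_t)^{-3}}.
\]
Since $\rho\geq 1$ always, with $\rho=1$ precisely where $Du_t=0$, the factor $\rho$ drops out at every spatial extremum of $\phi(v_t)$ on $T^2$ (which coincide with extrema of $v_t$, as $\phi$ is monotone in $s$).

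I first handle the upper bound. By Hamilton's trick, at a spatial maximizer of $\phi(v_t)$ one has $\rho=1$, so
\[
\tfrac{d^+}{dt}\phi_{\max}(t)=\sqrt{1-\phi_{\max}(t)^{-3}}\leq 1,
\]
whence $\phi_{\max}(t)\leq \phi_{\max}(0)+t$. For the lower bound, since $\rho\geq 1$ and $a\mapsto\sqrt{1-a^{-3}}$ is increasing on $(1,\infty)$, ODE comparison yields a pointwise estimate $\phi(v_t)\geq a(t)$, where $a$ solves $a'=\sqrt{1-a^{-3}}$ with $a(0)=\min_{\Sigma_0}\phi>1$. As $a\to\infty$ one has $1-a'(t)=O(a(t)^{-3})$; combined with the eventual linear growth of $a$, an elementary integration shows that $a(t)-t$ converges to a finite limit and, in particular, $a(t)\geq t-C$.

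Combining the two gives $|\phi(v_t)-t|\leq C$. For the affine lower bound $\phi(v_t)\geq 1+C^{-1}(t+1)$, the hypothesis $\min_{\Sigma_0}\phi>1$ from \eqref{convex} provides a uniform initial gap $\phi(v_t)\geq a(t)\geq a(0)>1$ that handles the small-$t$ regime, while $a(t)\geq t-C$ handles the large-$t$ regime; absorbing constants yields the claim.

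The main obstacle I expect is bookkeeping rather than substance: justifying the Hamilton-type differentiation of the merely Lipschitz function $\phi_{\max}(t)$ in the Dini sense, and controlling the ODE $a'=\sqrt{1-a^{-3}}$ uniformly near the degenerate point $a=1$ where the right-hand side vanishes. The strict inequality $\min_{\Sigma_0}\phi>1$ assumed in \eqref{convex} is precisely what keeps us uniformly away from this degeneracy and is what upgrades a sublinear lower bound (which would be worthless for closing the argument at $t\to\infty$) to a genuinely linear-in-$t$ one.
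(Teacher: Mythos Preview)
Your proposal is correct and follows essentially the same approach as the paper: compute $\partial_t\phi(v_t)=\rho\sqrt{1-\phi(v_t)^{-3}}$, observe that $\rho=1$ at spatial extrema, and compare with the scalar ODE $\Phi'=\sqrt{1-\Phi^{-3}}$, using the strict inequality $\min_{\Sigma_0}\phi>1$ to secure the linear lower bound. The only cosmetic differences are that the paper treats the minimum first and phrases the comparison via $\min_{T^2}\phi(v_t)$, whereas you invoke $\rho\geq 1$ pointwise; both arguments are equivalent.
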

\begin{proof}
	At the minimum point of $v_t$, we have $\rho=1$. Together with \eqref{equ:phi}, 
	$$\frac{d}{dt}\min_{T^2} \phi(v_t)\geq  \left(1- (\min_{T^2} \phi(v_t) )^{-3} \right)^{1/2}. $$
Let $\Phi(t,a)$ be the solution to the ODE
	\begin{align*}
	\frac{\partial \Phi}{\partial t}=(1-\Phi^{-3})^{1/2},\ \Phi(0,a) =a.
	\end{align*} 
Through the ODE comparison, $$\displaystyle\min_{T^2} \phi(v_t)\geq \Phi(t,\min_{T^2} \phi(v_0)).$$ 
From now on, we take $a=\displaystyle\min_{T^2} \phi(v_0)$. By the assumption \eqref{convex}, $a>1$.  Because $\Phi(t,a)$ is increasing in $t$, we have $\frac{\partial\Phi}{\partial t}\geq (1-a^{-3})^{1/2}$. Through integration, $\Phi(t,a)\geq (1-a^{-3})^{1/2}t+a$. This implies $\phi(v_t)\geq 1+C^{-1}(t+1)$ for $C$ large enough.  Furthermore, using the equation of $\Phi$ again, 
	\begin{align*}
	\left| \frac{\partial \Phi}{\partial t}-1 \right|\leq \frac{C}{(1+t)^3}.
	\end{align*} 
Thus $|\Phi(t,a)-t|\leq C$ and $\displaystyle\min_{T^2} \phi(v_t)\geq t-C $. With a similar argument, we can also derive $$\displaystyle\max_{T^2} \phi(v_t)\leq t+C.$$ The proof is finished.
\end{proof}

For the notation simplicity, we later denote $\phi(v_t)$ and $\frac{d\phi}{ds}(v_t)$ by $\phi$ and $\frac{d\phi}{ds} $ respectively. Recall that
\begin{align*}
\rho^2=1+\left(\frac{\partial u_t}{\partial\theta}\right)^2 +\left(\phi \bigg/  \frac{d\phi}{ds}   \right)^2\left(\frac{\partial u_t}{\partial\xi}\right)^2.
\end{align*}  

\begin{lemma}\label{lem:C11}
	There exists a constant $C$ depending on $\Sigma_0$ such that for $t\in [0,T_0)$,
	\begin{equation}\label{C11}
\max_{\Sigma_t}	(\rho^2-1 ) \leq  (C^{-1}t+1)^{-4} \max_{\Sigma_0}(\rho^2-1) .
	\end{equation}
\end{lemma}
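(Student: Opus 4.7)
The plan is to derive a parabolic evolution inequality for $\rho^2-1$ of the form
$$\partial_t(\rho^2-1) \leq L(\rho^2-1) - A(t)(\rho^2-1),$$
where $L$ is a second-order linear elliptic operator on $\Sigma_t$ (vanishing at a maximum when applied to the max-achiever) and $A(t) \geq 4/(t+C)$, and then apply the maximum principle and integrate the resulting ODE.

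First I would compute the evolution of $\rho^2$ directly from \eqref{v} and the graphical flow \eqref{grapheq3D}. From \eqref{grapheq3D} and $q'=\phi^{-1}$, the height function satisfies $\partial_t u_t = \rho\phi^{-2}$. Differentiating $\rho^2$ in $t$, commuting $\partial_t$ past the spatial derivatives $\partial_\xi$ and $\partial_{\theta^i}$, and substituting $\partial_t u_t$ produces two classes of terms: second-derivative terms in $u_t$ which, after regrouping using the Christoffel data of $g'$ in \eqref{def:gp}, assemble into an elliptic operator $L$ applied to $\rho^2-1$; and zeroth-order terms arising from the $v_t$-dependence of the coefficients $(\phi/\phi_s)^2$ and $\phi^{-2}$, which can be simplified using the ODE \eqref{equ:phi}.

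Next I would extract the damping coefficient $A(t)$ from the zeroth-order terms. Each of the two tangential torus directions contributes a term of order $\phi^{-2}\phi_s$ from differentiating the factor $\phi^{-2}$ in $\partial_t u_t$; meanwhile, the $t$-derivative of $(\phi/\phi_s)^2 = (1-\phi^{-3})^{-1}$ is of lower order $O(\phi^{-3})$. A careful accounting should give
$$A(t) \;\geq\; 4\phi^{-2}\phi_s \;=\; 4\phi^{-1}(1-\phi^{-3})^{1/2}.$$
By Lemma \ref{lem:C00}, $\phi(v_t) \geq 1 + C^{-1}(t+1)$, hence $A(t) \geq 4/(t+C)$. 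At a maximum point of $\rho^2-1$ on $T^2$ the elliptic contribution $L(\rho^2-1) \leq 0$, so
$$\frac{d}{dt}\max_{T^2}(\rho^2-1) \;\leq\; -\frac{4}{t+C}\,\max_{T^2}(\rho^2-1),$$
and integrating from $0$ to $t$ yields \eqref{C11}.

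The main obstacle will be the bookkeeping in the evolution computation: one must verify that all non-elliptic terms either combine into the damping term with the correct coefficient or are non-positive at the maximum point (so they can be discarded). The exponent $4$ in \eqref{C11} is dimension-sensitive---it arises because $n=3$ furnishes two tangential torus directions, each contributing a $\phi^{-1}$-sized damping, while the lower-order correction from $\partial_t(\phi/\phi_s)^2$ decays as $\phi^{-3}$ and is harmless. Getting exactly the exponent $4$ (rather than $4-\varepsilon$) requires using \eqref{equ:phi} and Lemma \ref{lem:C00} with some care.
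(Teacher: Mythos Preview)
Your proposal is correct and follows essentially the same route as the paper: compute $\partial_t(\rho^2-1)$ from $\partial_t u_t=\rho\phi^{-2}$, observe that the non-damping terms vanish at the spatial maximum, extract the damping coefficient $4\rho\phi^{-1}(1-\phi^{-3})^{1/2}\geq 4(t+C)^{-1}$ via Lemma~\ref{lem:C00}, and integrate the resulting ODE. One small correction: the ``operator $L$'' you describe is not genuinely second order in $\rho^2-1$; the second derivatives of $u_t$ enter only through $\partial_a\rho$, so the non-damping terms are first-order (gradient) terms $2\phi^{-2}\big(\partial_\theta u_t\,\partial_\theta\rho+(\phi/\phi_s)^2\partial_\xi u_t\,\partial_\xi\rho\big)$ plus the non-positive contribution from $\partial_t(\phi/\phi_s)^2$, and these vanish (respectively can be dropped) at the maximum exactly as you need.
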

\begin{proof}Using \eqref{equ:phi} and \eqref{grapheq3D}, we have
	\begin{align*}
	\frac{\partial^2 u_t}{\partial x^a\partial t}=&\phi^{-2}\frac{\partial \rho}{\partial x^a}-2\rho \phi^{-1}(1-\phi^{-3})^{1/2} \frac{\partial u_t}{\partial x^a}  
	\end{align*}   
	and
	\begin{align*}
	\frac{\partial }{\partial t} \left(\phi \bigg/  \frac{d\phi}{ds}   \right)^2  =&-3\rho \phi^{-4}(1-\phi^{-3})^{-3/2}.
	\end{align*}It follows that
	\begin{align*}
	\frac{\partial  }{\partial t}(\rho^2-1) =&2\phi^{-2}\left( \frac{\partial u_t}{\partial\theta}\frac{\partial \rho}{\partial\theta}+\left(\phi \bigg/  \frac{d\phi}{ds}   \right)^2   \frac{\partial u_t}{\partial\xi}\frac{\partial \rho}{\partial\xi} \right)\\
	&-3 \rho\phi^{-4}(1-\phi^{-3})^{3/2} \left(\frac{\partial u_t}{\partial\xi}\right)^2 -4\rho \phi^{-1}(1-\phi^{-3})^{1/2}  (\rho^2-1).
	\end{align*}
	At the maximum point of $ \rho^2 $, $d\rho	=0$ and
	\begin{align*}
	\frac{\partial  }{\partial t}(\rho^2-1) \leq  &-4\rho \phi^{-1}(1-\phi^{-3})^{1/2}  (\rho^2-1).
	\end{align*}
	Using \eqref{C00}, there exists a constant $C $ such that for all $t\in [0,T_0)$,
	\begin{align*}
	4\rho \phi^{-1}(1-\phi^{-3})^{1/2}\geq  4 \phi^{-1}(1-\phi^{-3})^{1/2}\geq 4(t+C)^{-1}.
	\end{align*}
	Therefore,
	\begin{align*}
	\frac{d  }{d t}\max_{\Sigma_t}(\rho^2-1) \leq  &-4(t+C)^{-1} \max_{\Sigma_t}(\rho^2-1).
	\end{align*} 
	By the ODE comparison, 
	\begin{align*}
	\max_{\Sigma_t}(\rho^2-1)  \leq  (C^{-1}t+1)^{-4} \max_{\Sigma_0}(\rho^2-1) .
	\end{align*}
	The proof is finished.
\end{proof}

Recall that $g'=\phi^{-2}g$ is a conformal metric which asymptotic to the flat metric on $\mathbb{R}\times T^2$. Let $\gamma'_t$ be the induced metric of $g'$ on $\Sigma_t$ and $D'$ be the Levi-Civita connection of $\gamma'_t$.  Lemmas \ref{lem:C00} and \ref{lem:C11} imply, provided $T_0=\infty$, $\gamma'_t$ converges the the flat metric $d\xi^2+d\theta^2$ in $C^0$. Next, we use the second fundamental form $h'_{ab}$ to bound the hessian of $u_t$ from below. 
\begin{lemma}\label{lem:C44}
	Suppose $T_0=\infty.$ Then there exists a constant $C$ depending on $\Sigma_0$ such that  for $t\geq C+1$,
	\begin{align*}
	D'_b(D')^a u_t \geq -  \left( C (t-C )^{-2}\log t\right) \delta_b^a. 
	\end{align*}
\end{lemma}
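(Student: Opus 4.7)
The plan is to derive the Hessian estimate by propagating the initial convexity $\tilde{h}_{ab} \geq 0$ along the flow, converting it via the conformal chain $\tilde{g} = \phi^4 g'$, and expressing the result in terms of $D'_b(D')^a u_t$ via the graph formulas of Section \ref{Sec:graphs}.

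Because \eqref{normalflow} is the unit normal flow in $\tilde g$, the Weingarten tensor $\tilde h^a_b$ of $\Sigma_t$ satisfies an evolution of the schematic form $\partial_t \tilde h^a_b = -\tilde h^a_c \tilde h^c_b - \tilde R^a{}_{\tilde\nu b \tilde\nu}$. The initial convexity together with compactness of $\Sigma_0$ gives $0 \leq \tilde h^a_b(0) \leq C_0\, \delta^a_b$ for some $C_0$. Applying Hamilton's tensor maximum principle, using the ambient curvature of $\tilde g$ computed from the Appendix formulas (with $\Psi = \phi^{-1}\, d\phi/ds$ and conformal factor $2\log\phi$ relative to $g'$), I would preserve $\tilde h^a_b \geq 0$ and derive a sharp decaying upper bound $\tilde h^a_b \leq (c_0/(t-C))\,\delta^a_b$ for a specific constant $c_0$ — this is the analogue of the flat-space reaction $\partial_t \lambda = -\lambda^2$, which forces the curvature to decay like $1/t$.

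The conformal identity $h'^a_b = \phi^2\, \tilde h^a_b - 2\nu'(\log\phi)\,\delta^a_b$, combined with $\nu'(\log\phi) = \rho^{-1}\phi(1-\phi^{-n})^{1/2}$ and the asymptotics $\phi = t + O(1)$, $\rho = 1 + O(t^{-4})$ supplied by Lemmas \ref{lem:C00} and \ref{lem:C11}, produces a delicate cancellation of the leading $O(t)$ contributions. Provided the constant $c_0$ is sharp, this yields $h'^a_b \leq C(t-C)^{-2}\log t \cdot \delta^a_b$ for $t \geq C+1$. Finally, the graph formulas of the Appendix express $h'_{ab}$ as $-\rho^{-1}\, D'_a D'_b u_t$ plus background terms involving derivatives of $\Psi$; rearranging and absorbing the background contributions with Lemma \ref{lem:C11} produces the desired Hessian bound $D'_b(D')^a u_t \geq -C(t-C)^{-2}\log t \cdot \delta_b^a$.

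The main obstacle is the tensor maximum principle step: controlling $\tilde h^a_b$ from above with the sharp leading constant $c_0$ needed for the cancellation. This requires a careful analysis of the ambient curvature of $\tilde g$, which is not quite flat at infinity but carries subleading errors of order $\phi^{-n}$. These errors integrate to an $O(\log t)$ correction along the flow, explaining the logarithmic factor in the final estimate. Should a direct preservation at the sharp constant fail, an auxiliary tensor $\tilde h^a_b - \lambda(t)\, \delta^a_b$ with $\lambda(t) \sim (t-C)^{-1}$ chosen to absorb the borderline curvature contribution would produce the same conclusion.
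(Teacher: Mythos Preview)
Your approach has a genuine gap: the ``delicate cancellation'' you rely on cannot produce the required $O(t^{-2}\log t)$ bound. The conformal identity gives
\[
(h')^a_b \;=\; \phi^2\,\tilde h^a_b \;-\; 2\rho^{-1}\tfrac{d\phi}{ds}\,\delta^a_b,
\]
so an upper bound $\tilde h^a_b\le \big(c_0/(t-C)\big)\delta^a_b$ yields, at best,
\[
(h')^a_b \;\le\; \Big(\phi^2\cdot \tfrac{c_0}{t-C}-2\rho^{-1}\phi(1-\phi^{-3})^{1/2}\Big)\delta^a_b.
\]
Even with the sharp value $c_0=2$ (which is indeed what the asymptotic Riccati ODE $\lambda'=-\lambda^2+2t^{-2}$ forces), the leading terms are $2\phi^2/(t-C)$ and $2\phi$, whose difference is $2\phi\cdot(\phi-t+C)/(t-C)$. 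Since Lemma~\ref{lem:C00} only gives $|\phi-t|\le C$, this difference is $O(1)$, not $O(t^{-2}\log t)$. The $O(1)$ oscillation of $\phi$ across $\Sigma_t$ is amplified by the factor $\phi^2\sim t^2$ and destroys any cancellation beyond the top order. Your proposed auxiliary tensor $\tilde h^a_b-\lambda(t)\delta^a_b$ with a \emph{scalar} $\lambda(t)$ cannot cure this, for the same reason: the needed subtraction is the pointwise function $2\rho^{-1}\phi^{-2}\tfrac{d\phi}{ds}$, and once you subtract that you are literally tracking $\phi^{-2}(h')^a_b$.

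The paper avoids this trap by working directly with $(h')^a_b$. In the $g'$ frame the flow has non-constant speed $\phi^{-2}$, so the evolution of $(h')^a_b$ carries the extra term $-D'_b(D')^a\phi^{-2}$, which after using \eqref{hessianu3D} produces a \emph{linear} damping $-2\rho^{-1}\phi^{-1}(1-\phi^{-3})^{1/2}(h')^a_b\sim -\tfrac{2}{t}(h')^a_b$ together with a source of size $O(t^{-3})$. The resulting scalar ODE $w_t'\le -2(t+C)^{-1}w_t+C(t-C)^{-3}$ integrates (with factor $(t+C)^2$) to $w_t\le C(t-C)^{-2}\log t$ directly; the $\log t$ is exactly $\int t^{-1}\,dt$. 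No cancellation is involved, and the preserved convexity of $\tilde h$ plays no role in this lemma.
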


\begin{proof}
The flow \eqref{normalflow} can be rewritten as
	\begin{align*}
	\frac{\partial F }{\partial t}= \phi^{-2}\nu'.
	\end{align*} 
	Here $\nu'=\phi\nu$ is the unit normal vector with respect to $g'$.
	The evolution equation of $(h')_b^a$ is given by 
	\begin{align*}
	\frac{\partial}{\partial t}(h')_b^a=-\phi^{-2}(h')_b^c(h')_c^a-D'_b(D')^a\phi^{-2}-\phi^{-2} (dx^a)_\alpha (\nu')^\beta \left(\frac{\partial}{\partial x^b} \right)^\lambda (\nu')^\mu   ({R}')^\alpha_{\ \beta\lambda\mu}.
	\end{align*}
	Here $(R')_{\alpha\beta\lambda\mu}$ is the Riemannian curvature tensor of $g'$. In the estimates below, the norms are computed with respect to $\gamma'_t$. From \eqref{curvature-g'} and \eqref{equ:phi}, the only non-zero component of $R'$ is
	\begin{align*}
	R'_{q\xi q\xi}=3\phi^{-1}(1-\phi^{-3}).
	\end{align*} 
	Together with \eqref{C00}, for $t$ large enough,
	\begin{align*}
	\left| -\phi^{-2} (dx^a)_\alpha (\nu')^\beta \left(\frac{\partial}{\partial x^b} \right)^\lambda (\nu')^\mu   ({R}')^\alpha_{\ \beta\lambda\mu} \right|\leq C(t-C)^{-3}.
	\end{align*} 
	Using \eqref{equ:phi} and \eqref{equ:q},
	\begin{align*}
	-D'_b(D')^a\phi^{-2} =&2\phi^{-1}(1-\phi^{-3})^{1/2} D'_b(D')^a {u}_t+(-2+5\phi^{-3})D'_b {u}_t (D')^a{u}_t.
	\end{align*} 
	The relation between $h'_{ab}$ and $D'_bD'_a u_t$ is given in Corollary \ref{cor:h-hessian} with $\Psi=\phi^{-1}\frac{d\phi}{ds}$. Using \eqref{equ:phi}, we derive
	\begin{align}\label{hessianu3D}
	D'_b(D')_a {u}_t=-\rho^{-1}h'_{ab}+\frac{3}{2} \phi^{-2}(1-\phi^{-3})^{1/2}D'_b\xi D'_a\xi.
	\end{align} 
	Hence
	\begin{align*}
	-D'_b(D')^a\phi^{-2}=&-2\rho^{-1}\phi^{-1}(1-\phi^{-3})^{1/2}(h')_b^a+(-2+5\phi^{-3})D'_b {u}_t (D')^a{u}_t\\
	&+3\phi^{-3}(1-\phi^{-3})  D'_b\xi  (D')^a\xi .
	\end{align*}
	From \eqref{C00} and \eqref{C11}, for $t$ large enough, we have $$ 
	  |D'_b {u}_t (D')^a{u}_t|\leq  C(t-C)^{-4}$$ and $$ 
	   |3\phi^{-3}(1-\phi^{-3})D'_b \xi (D')^a \xi   |\leq   (t-C)^{-3}.$$ Putting the above together, we get
	   	\begin{align*}
	\left|\frac{\partial}{\partial t}(h')_b^a+2\rho^{-1}\phi^{-1}(1-\phi^{-3})^{1/2} (h')_b^a+\phi^{-2}(h')_b^c(h')_c^a\right|\leq C(t-C)^{-3}. 
	\end{align*} 
		Let $w_t$ be the maximum eigenvalue of $(h')_a^b$. Using again \eqref{C00} and \eqref{C11}, for $t$ large enough, we have $
	   2\rho^{-1}\phi^{-1}(1-\phi^{-3})^{1/2}\geq  2(t+C)^{-1}$. Therefore,
	\begin{align*}
	\frac{d w_t}{d t}\leq -2(t+C)^{-1} w_t+C(t-C)^{-3}.
	\end{align*} 
	From the ODE comparison, $w_t\leq C(t-C)^{-2}\log t $ for $t$ large enough. The assertion then follows from the view of \eqref{hessianu3D}.
\end{proof}

\begin{proof}[Proof of Lemma \ref{lem:limitQ_3D}]
	From \eqref{limit},
	\begin{align*}
	Q(\Sigma_t)+\frac{3m}{2}=&\int_{T^2 }  \rho^2\phi \frac{d\phi}{ds}\Delta' u_t- \frac{3}{2} \left(\frac{\partial u_t}{\partial\theta}\right)^2 \, d\xi\wedge d\theta. 
	\end{align*}
	 From Lemma \ref{lem:C11}, $\left(\frac{\partial u_t}{\partial \theta}\right)^2\leq C(t-C)^{-4}$. It remains to prove
	\begin{align*}
	\limsup_{t\to\infty} \int_{T^2}  \rho^2\phi \frac{d\phi}{ds}\Delta' u_t \, d\xi\wedge d\theta\leq 0. 
	\end{align*}
	Recall that $\gamma'_t$ is the induced metric of $\Sigma$ from $g'$. Because $$d\textup{vol}_{\gamma'_t}=\rho \phi^{-1}\frac{d\phi}{ds}\, d\xi\wedge d\theta,$$
	\begin{align*}
	&\int_{ {T}^{2}}  \rho^2\phi \frac{d\phi}{ds}\Delta' u_t \, d\xi\wedge d\theta=  \int_{\Sigma_t} \rho \phi^2  \Delta' u_t \, d\textup{vol}_{\gamma'_t} 
	= \underbrace{ \int_{\Sigma_t}  (\rho-1) \phi^2  \Delta' u_t \, d\textup{vol}_{\gamma'_t} }_{\textsc{I}}+\underbrace{ \int_{\Sigma_t}   \phi^2  \Delta' u_t \, d\textup{vol}_{\gamma'_t} }_{\textsc{II}}.
	\end{align*}
	From Lemmas \ref{lem:C00}, \ref{lem:C11} and \ref{lem:C44}, $\phi^2\leq C (t+C)^2$,  $|\rho-1|\leq C(t-C)^{-4}$ and
	\begin{align*}
	\int_{\Sigma_t}   |\Delta' u_t| \, d\textup{vol}_{\gamma'_t}=2\int_{\Sigma_t}   (\Delta' u_t)_- \, d\textup{vol}_{\gamma'_t}\leq C (t-C)^{-2}\log t.
	\end{align*}  
	Therefore the first term $\textsc{I}$ goes to zero. Through integration by parts,
	\begin{align*}
	\textsc{II}=-\int_{\Sigma_t} 2\phi \frac{d\phi}{ds} |D' u_t|^2_{\gamma'_t} \, d\textup{vol}_{\gamma'_t}\leq 0.
	\end{align*}
	Thus the assertion \eqref{limitQ_3D} follows.
\end{proof}

Lastly, we prove Lemma \ref{lem:T0_3D}. Recall that $\tilde{g}=\phi^2 g$ and that $\tilde{h}_b^a$ is the Weingarten tensor with respect to $\tilde{g}$. The flow \eqref{normalflow} is a unit normal flow with respect to $\tilde{g}$. The evolution equation of $\tilde{h}_b^a$ is given by
\begin{equation}\label{equ:ht}
\frac{\partial}{\partial t}\tilde{h}_b^a=-\tilde{h}_b^c\tilde{h}_c^a-(dx^a)_\alpha \tilde{\nu}^\beta \left(\frac{\partial}{\partial x^b} \right)^\lambda \tilde{\nu}^\mu  \tilde{R}^\alpha_{\ \beta\lambda\mu}.
\end{equation}
\begin{lemma}\label{lem:C22}
	Suppose the assumption \eqref{convex} holds. Then there exists a continuous function $C(t)$ defined on $[0,\infty)$ such that for all $t\in [0,T_0)$,
	\begin{align*}
	|\tilde{h}_b^a|_{\tilde{\gamma}_t}\leq |\tilde{h}_b^a|_{\tilde{\gamma}_0}+C(t) .
	\end{align*}
\end{lemma}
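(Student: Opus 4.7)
The plan is to treat the evolution equation \eqref{equ:ht} as a reaction ODE along each flow line, using the initial convexity \eqref{convex} together with the $C^0$ estimate from Lemma \ref{lem:C00} to control $\tilde h_b^a$ in terms of $t$.

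First I would preserve the non-negativity of $\tilde h_b^a$ along the flow via a pointwise maximum principle. Since \eqref{equ:ht} contains no Laplacian, it suffices to check the sign of the forcing at a first-contact null eigenvector: if at time $t_0$ and point $p$ the smallest eigenvalue of $\tilde h$ vanishes with unit null eigenvector $X$, then
\begin{equation*}
\frac{d}{dt}\bigg|_{t_0}\tilde h(X,X) = -\tilde R(X,\tilde\nu,X,\tilde\nu).
\end{equation*}
Using the conformal curvature formulas in Appendix \ref{sec:app1} applied to $\tilde g = \phi^4 g'$, together with the explicit curvature of $g'$ (with $\Psi = \phi^{-1}\frac{d\phi}{ds}$), the quantity $\tilde R(X,\tilde\nu,X,\tilde\nu)$ becomes an explicit expression in $\phi$, $\frac{d\phi}{ds}$, and the components of $X$ and $\tilde\nu$. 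I would then show this sectional-curvature type quantity is non-positive using the hypothesis $\phi > 1$ from \eqref{convex}, which is preserved along the flow by Lemma \ref{lem:C00}.

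With non-negativity of $\tilde h_b^a$ established on $[0,T_0)$, differentiating $|\tilde h|^2_{\tilde\gamma_t} = \tilde h_b^a \tilde h_a^b$ along the flow gives
\begin{equation*}
\frac{d}{dt}|\tilde h|^2 = -2\,\mathrm{tr}(\tilde h^3) + 2\,\tilde h_b^a K_a^b,
\end{equation*}
where $K_a^b$ denotes the curvature term in \eqref{equ:ht}. Since $\tilde h \geq 0$ implies $\mathrm{tr}(\tilde h^3) \geq 0$, one obtains $\frac{d}{dt}|\tilde h|_{\tilde\gamma_t} \leq |K|_{\tilde\gamma_t}$ pointwise, and hence after taking the supremum over $\Sigma_t$. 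The conformal formulas together with $|\phi(v_t)-t|\leq C$ from Lemma \ref{lem:C00} bound $|K|_{\tilde\gamma_t}$ by a continuous function $C_0(t)$, so integrating yields the claim with $C(t) = \int_0^t C_0(s)\,ds$.

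The main obstacle is the sign verification in the maximum principle: showing $-\tilde R(X,\tilde\nu,X,\tilde\nu) \geq 0$ at any null eigenvector of $\tilde h$. This is a geometric condition specific to the conformal metric $\tilde g = \phi^2 g$ on the Horowitz-Myers geon, and I expect the hypothesis $\min_\Sigma \phi > 1$ in \eqref{convex} is precisely what is needed to make the bracketed conformal-curvature terms have the right sign.
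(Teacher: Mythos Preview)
Your proposal is correct and follows essentially the same route as the paper: preserve $\tilde h\ge 0$ via the sign of the curvature term in \eqref{equ:ht}, then bound the curvature term using the $C^0$ estimate and integrate. One clarification: the paper carries out the explicit computation of $\widetilde R_{\alpha\beta\lambda\mu}$ (via \eqref{curvature-gt} with $\Psi=\phi^{-1}\frac{d\phi}{ds}$, $\psi=2\log\phi$) and finds that \emph{all} sectional curvatures of $\tilde g$ are non-positive whenever $\phi\ge 1$, which is automatic on $M$; so the hypothesis $\min_{\Sigma_0}\phi>1$ is not what drives the sign here---it is used only for the $C^0$ estimate in Lemma~\ref{lem:C00}.
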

\begin{proof}
	Using \eqref{curvature-gt} with $\Psi=\phi^{-1}\frac{d\phi}{ds}$, $\psi=2\log\phi$ and \eqref{equ:phi}, we derive that non-zero components of $\widetilde{R}_{\alpha\beta\lambda\mu}$ are 
	\begin{align*}
	\widetilde{R}_{q\xi q\xi}=&-\phi^6(2+\phi^{-3})(1-\phi^{-3}), \\
	\widetilde{R}_{q\theta  q\theta}=&-\phi^6(2+\phi^{-3}),\\
	\widetilde{R}_{\xi\theta  \xi\theta}=&-\phi^6(4-\phi^{-3})(1-\phi^{-3}).
	\end{align*}
	In particular, the sectional curvature of $\tilde{g}$ is non-positive. From the view of \eqref{equ:ht}, $\tilde{h}_b^a$ remains non-negative definite under the assumption \eqref{convex}. From \eqref{C00}, there exists a continuous function $C(t)$ such that for any $t\in [0,T_0)$,
	\begin{align*}
	\left| (dx^a)_\alpha \tilde{\nu}^\beta \left(\frac{\partial}{\partial x^b} \right)^\lambda \tilde{\nu}^\mu  \tilde{R}^\alpha_{\ \beta\lambda\mu} \right|\leq C(t).
	\end{align*}
	Applying the ODE comparison to \eqref{equ:ht}, we have an upper bound for the maximum eigenvalue of $\tilde{h}^a_b$ in any finite time. The proof is finished. 
\end{proof}
\begin{proof}[Proof of Lemma \ref{lem:T0_3D}]
	Recall that $v_t$ is the height function of $\Sigma_t$ in the $s$ coordinate and $u_t=q(v_t)$. It suffices to show that for any $T <\infty$, there exists constants $C_j(T )$, $j=0,1,\dots$ such that for all $0\leq t<T $
	\begin{align*}
	\left| \frac{\partial^j u_t}{\partial x^{a_1}\partial x^{a_2}\dots \partial x^{a_j}  } \right|\leq C_j(T).
	\end{align*}
	Lemma \ref{lem:C00} and Lemma \ref{lem:C11} provide such bounds for the case $j=0$ and $j=1$ respectively. From the view of \eqref{hessianu3D} and Lemma \ref{lem:ht}, Lemma \ref{lem:C22} shows the case $j=2$. Furthermore, by differentiating \eqref{hessianu3D}, it suffices to show that for all $j\geq 1$,
	\begin{align}\label{Cjht}
	|\widetilde{D}^j \tilde{h}|_{\tilde{\gamma}}\leq C_{j+2}(T).  
	\end{align}
	Here $\widetilde{D}$ is the Levi-Civita connection of $\tilde{\gamma}$. Compute
	\begin{align*}
	\frac{\partial}{\partial t} \tilde{\gamma}(\widetilde{D}^j \tilde{h},\widetilde{D}^j \tilde{h})=\underbrace{\left(\frac{\partial}{\partial t} \tilde{\gamma}\right)(\widetilde{D}^j \tilde{h},\widetilde{D}^j \tilde{h})}_{\textsc{I}}+\underbrace{ \tilde{\gamma}\left( \left[\frac{\partial}{\partial t},\widetilde{D}^j\right] \tilde{h},\widetilde{D}^j \tilde{h}\right)}_{\textsc{II}}+\underbrace{ \tilde{\gamma} \left(   \widetilde{D}^j \frac{\partial}{\partial t} \tilde{h},\widetilde{D}^j \tilde{h}\right)}_{\textsc{III}}.
	\end{align*}
	Because $\frac{\partial}{\partial t}\tilde{\gamma}_{ab}=2\tilde{h}_{ab}$, $$\textsc{I}=\widetilde{D}^j  \tilde{h}\ast \widetilde{D}^j  \tilde{h}\ast \tilde{h}.$$
For any tensor $N$, we have the commutation relation $[\frac{\partial}{\partial t},\widetilde{D}^j] N=\sum_{i=0}^{j-1} \widetilde{D}^{j-i} \tilde{h}\ast \widetilde{D}^i N$. Hence
	\begin{align*}
	\textsc{II}=\sum_{i=0}^{j}\widetilde{D}^j  \tilde{h}\ast \widetilde{D}^{j-i}  \tilde{h}\ast \widetilde{D}^{i}\tilde{h}.
	\end{align*}
	Finally, from \eqref{equ:ht}, 
	\begin{align*}
	\textsc{III}=\sum_{i=0}^{j}\widetilde{D}^j  \tilde{h}\ast \widetilde{D}^{j-i}  \tilde{h}\ast \widetilde{D}^{i}\tilde{h}+\widetilde{D}^j K\ast \widetilde{D}^j\tilde{h} .
	\end{align*}
	Here
	\begin{align*}
	K_{ab}=\left(\frac{\partial}{\partial x^a} \right)^\alpha \tilde{\nu}^\beta \left(\frac{\partial}{\partial x^b} \right)^\lambda \tilde{\nu}^\mu  \tilde{R}_{\alpha \beta\lambda\mu}.
	\end{align*}
	Let 
	$$\omega_j(t)=\max_{\Sigma_t}|\widetilde{D}^j \tilde{h}|^2_{\tilde{\gamma}}.$$ 
	We use induction and assume $\omega_i(t)\leq C_i(T)$ for $0\leq i\leq j-1$ and $t\in [0,T) $. By Lemma \ref{lem:nablaR}, $|\widetilde{D}^i K|_{\tilde{\gamma}}$ is bounded for $0\leq i\leq j$. We then deduce
	\begin{align*}
	\frac{d}{dt}\omega_j\leq C\omega_j+C.
	\end{align*}
	From the ODE comparison, $\omega_j$ remains bounded in $t\in [0,T)$. Thus \eqref{Cjht} follows.
\end{proof}

\section{Four-dimensional with symmetry in $\xi$}\label{Sec:4D}
In this section, we fix $n=4$ and prove Theorem \ref{thm:4d}. Recall that $F:[0,T_0)\times T^{n-1}\to M$ solves
\begin{equation}\label{normalflow_4D}
\frac{\partial F}{\partial t} =\left(\phi\frac{d\phi}{ds} \right)^{-1} \nu.
\end{equation} 
Here we take $T_0\in (0,\infty]$ to be the largest number such that \eqref{normalflow_4D} has a smooth graphical solution in $t\in[0,T_0).$\\

Equation \eqref{normalflow_4D} can be viewed as the unit normal flow with respect to the conformal metric $\bar{g}=\left(\phi\frac{d \phi}{ds}\right)^2 g $. Recall that $\bar{h}_{ab}$ is the the second fundamental form with respect to $\bar{g}$. Our main assumption is  
\begin{equation}\label{assumption_4D}
\begin{split}
\bar{h}_{ab}\ \textup{is non-negative definite for } \Sigma_0\ \textup{and}\ \min_{\Sigma_0}\phi^4\geq 1+\frac{2}{\sqrt{3}}.
\end{split} 
\end{equation}
We list two lemmas which allow us to prove Theorem \ref{thm:4d}. 

\begin{lemma}\label{lem:limitQ_4D}
	Suppose $T_0=\infty$. Then
	\begin{align}\label{limitQ_4D}
	\limsup_{t\to\infty}Q(\Sigma_t)\leq -2m.
	\end{align} 
\end{lemma}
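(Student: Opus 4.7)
The plan is to adapt the four-step structure of the 3D proof of Lemma \ref{lem:limitQ_3D} to dimension four. The starting identity is \eqref{limit} with $n=4$, which under the axial $\xi$-symmetry ($\partial_\xi u_t = 0$) reads
\begin{equation*}
Q(\Sigma_t)+2m = \int_{T^3} \rho^2 \phi^2 \tfrac{d\phi}{ds}\Delta' u_t\,d\xi\wedge d\theta^3\wedge d\theta^4 \;-\; 2\int_{T^3}|\nabla_\theta u_t|^2\,d\xi\wedge d\theta^3\wedge d\theta^4,
\end{equation*}
and the aim is to show its $\limsup$ is $\le 0$. The second term is pointwise non-positive, so the work lies in controlling the first integral.

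First I would establish a $C^0$ estimate. With $p = (\phi\tfrac{d\phi}{ds})^{-1}$, the graph equation is $\partial_t v_t = \rho p$; since $\rho = 1$ at the minimum of $v_t$, the chain rule and \eqref{equ:phi} give $\partial_t\min\phi(v_t) = \phi^{-1}$, equivalently $\partial_t\min\phi^2 = 2$. ODE comparison yields $\phi(v_t)^2 = 2t + O(1)$, so both $\phi$ and $\tfrac{d\phi}{ds}$ grow like $\sqrt{2t}$. Second, I would derive decay of $\rho^2 - 1$. Since $\partial_t u_t = \rho\phi^{-3}(1-\phi^{-4})^{-1/2}$ and $\rho^2 - 1 = |\nabla_\theta u_t|^2$, differentiating at a spatial maximum (where $\nabla\rho = 0$) and simplifying via \eqref{equ:phi} produces
\begin{equation*}
\frac{d}{dt}\max_{\Sigma_t}(\rho^2 - 1) \;\le\; 2\rho\bigl[-3\phi^{-2} - 2\phi^{-6}(1-\phi^{-4})^{-1}\bigr]\max_{\Sigma_t}(\rho^2 - 1) \;\le\; -6\rho\phi^{-2}\max_{\Sigma_t}(\rho^2 - 1),
\end{equation*}
so combined with Step 1 we obtain $\max(\rho^2 - 1) \le Ct^{-3}$.

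Third, I would bound the Hessian of $u_t$ in $\gamma'_t$, in direct parallel with Lemma \ref{lem:C44}. Viewing \eqref{normalflow_4D} as a normal flow in $g'$ with speed $q = \phi^{-2}(\tfrac{d\phi}{ds})^{-1}$, the Weingarten tensor $(h')^a_b$ evolves as $\partial_t (h')^a_b = -q(h')^a_c(h')^c_b - D'_bD'^a q - q\cdot R'\text{-term}$. Computing $-D'_bD'^a q$ using $\partial_a q = q'(\phi)\phi\tfrac{d\phi}{ds}\partial_a u_t$, invoking Corollary \ref{cor:h-hessian} (with $\Psi = \phi^{-1}\tfrac{d\phi}{ds}$) to trade $D'_bD'_a u_t$ for $h'_{ab}$, and estimating the $R'$-term from the appendix curvatures together with Steps 1--2, one arrives at a linear differential inequality
\begin{equation*}
\partial_t (h')^a_b + \bigl(\text{positive decaying coefficient}\bigr)(h')^a_b + q(h')^a_c(h')^c_b \;\le\; Ct^{-\beta}
\end{equation*}
for some $\beta > 1$, whose ODE comparison gives a decaying bound on the largest eigenvalue of $(h')^a_b$. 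This translates via Corollary \ref{cor:h-hessian} into $D'_bD'_a u_t \ge -\omega(t)\gamma'_{ab}$ with $\omega(t) \to 0$, so $\int_{\Sigma_t}|\Delta' u_t|\,d\textup{vol}_{\gamma'_t} \le C\omega(t)$, using that $|\Sigma_t|_{\gamma'_t}$ is uniformly bounded by Steps 1--2.

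Finally, combining everything: with $d\textup{vol}_{\gamma'_t} = \rho\phi^{-1}\tfrac{d\phi}{ds}\,d\xi\wedge d\theta^3\wedge d\theta^4$, the first integral rewrites as $\int_{\Sigma_t}\rho\phi^3\Delta' u_t\,d\textup{vol}_{\gamma'_t}$. Splitting $\rho = 1 + (\rho - 1)$ and integrating the main piece by parts, using $D'\phi^3 = 3\phi^3\tfrac{d\phi}{ds}D' u_t$, yields
\begin{equation*}
\int_{\Sigma_t}\phi^3\Delta' u_t\,d\textup{vol}_{\gamma'_t} = -3\int_{\Sigma_t}\phi^3\tfrac{d\phi}{ds}|D' u_t|^2_{\gamma'_t}\,d\textup{vol}_{\gamma'_t} \;\le\; 0,
\end{equation*}
while the remainder is controlled by $\max|\rho - 1|\cdot\max\phi^3\cdot\int|\Delta' u_t|\,d\textup{vol}_{\gamma'_t} \le Ct^{-3}\cdot t^{3/2}\cdot \omega(t) \to 0$. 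The main obstacle I anticipate is the calibration in Step 3: because in 4D $\phi\sim\sqrt t$ rather than $\sim t$ as in 3D, the factor $\max\phi^3\sim t^{3/2}$ in the remainder leaves a tighter margin, so the decay of $\omega(t)$ must be checked to satisfy $t^{3/2}\omega(t)\to 0$. This requires genuinely tracking the coefficient of $(h')^a_b$ in its evolution equation (where the behavior of $q'(\phi)$ and the curvature of $g'$ both enter), which is the most delicate part.
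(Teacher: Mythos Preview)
Your proposal is correct and follows essentially the same route as the paper: the $C^0$ bound (Lemma~\ref{lem:C00_4D}), the $\rho^2-1$ decay (Lemma~\ref{lem:C11_4D}), the one-sided Hessian control via the evolution of $(h')^a_b$ (Lemma~4.5, giving $\omega(t)=C(t-C)^{-3/2}\log t$), and the final split $\rho=1+(\rho-1)$ with integration by parts are exactly the paper's steps. Your closing worry is miscounted---from your own bound the remainder is $\le Ct^{-3}\cdot t^{3/2}\cdot\omega(t)=Ct^{-3/2}\omega(t)$, so any $\omega(t)\to 0$ already suffices and the margin is not tight at all.
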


\begin{lemma}\label{lem:T0_4D}
	Under the assumption \eqref{assumption_4D}	, we have $T_0=\infty$.
\end{lemma}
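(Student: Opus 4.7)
\textbf{Proof proposal for Lemma \ref{lem:T0_4D}.} The plan is to mirror the four-step bootstrap used to prove Lemma \ref{lem:T0_3D}, with the conformal metric $\bar{g}=\left(\phi\frac{d\phi}{ds}\right)^{2}g$ playing the role that $\tilde{g}=\phi^{2}g$ plays in three dimensions. The axial symmetry of $\Sigma_{0}$ in $\xi$ is preserved by \eqref{normalflow_4D}, so throughout the flow $v_{t}$ and all derived quantities depend only on $(\theta^{3},\theta^{4})$. I would establish, in order, uniform $C^{0}$, $C^{1}$, and $C^{2}$ bounds on $v_{t}$ on any finite time interval $[0,T)$, and then upgrade to $C^{k}$ bounds for $k\geq 3$ by the same $\omega_{j}$ induction that closes the proof of Lemma \ref{lem:T0_3D}.

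The $C^{0}$ estimate proceeds as in Lemma \ref{lem:C00}: at the minimum of $v_{t}$ one has $\rho=1$, so \eqref{normalflow_4D} gives $\frac{d}{dt}\phi(v_{t})\bigl|_{\min}\geq \phi^{-1}$, which upon integration yields $\phi(v_{t})^{2}\geq c(t+1)$ and in particular preserves the initial bound $\phi^{4}\geq 1+2/\sqrt{3}$; the analogous argument at the maximum gives $\phi(v_{t})\leq C(t+1)^{1/2}$. For the $C^{1}$ estimate on the slope $\rho$, I would differentiate the graph equation $\partial_{t}v_{t}=\rho\left(\phi\frac{d\phi}{ds}\right)^{-1}$, evaluate at a maximum point of $\rho^{2}$, and apply the maximum principle: the resulting differential inequality has the form $\frac{d}{dt}\max\rho^{2}\leq -c(t)(\max\rho^{2}-1)$ with $c(t)$ not integrable at infinity, giving a uniform $C^{1}$ bound and indeed decay to $1$.

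The crucial step is the $C^{2}$ estimate, which is the exact analog of Lemma \ref{lem:C22}. Since \eqref{normalflow_4D} is the unit-normal flow for $\bar{g}$, the Weingarten tensor evolves by
\[
\frac{\partial}{\partial t}\bar{h}_{b}^{a}=-\bar{h}_{b}^{c}\bar{h}_{c}^{a}-(dx^{a})_{\alpha}\bar{\nu}^{\beta}\left(\frac{\partial}{\partial x^{b}}\right)^{\lambda}\bar{\nu}^{\mu}\,\bar{R}^{\alpha}{}_{\beta\lambda\mu}.
\]
Using the formulas of Appendix \ref{sec:app1} with $\Psi=\phi^{-1}\frac{d\phi}{ds}$ and $\psi=2\log\phi+\log\frac{d\phi}{ds}$, together with $\frac{d\phi}{ds}=\phi(1-\phi^{-4})^{1/2}$, I would compute $\bar{R}_{\alpha\beta\lambda\mu}$ in closed form. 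The content of the numerical threshold in \eqref{assumption_4D} should be that $\phi^{4}\geq 1+2/\sqrt{3}$ is precisely the range in which every sectional curvature $\bar{R}(\bar{\nu},X,\bar{\nu},X)$ along tangent directions $X$ compatible with the $\xi$-symmetry is non-positive. Assuming this, the tensor maximum principle preserves $\bar{h}_{b}^{a}\geq 0$: at a first touchdown of an eigenvalue with eigenvector $X$, the evolution equation yields $\partial_{t}\bar{h}(X,X)=-\bar{R}(\bar{\nu},X,\bar{\nu},X)\geq 0$. With $\bar{h}_{b}^{a}\geq 0$ preserved, an upper bound $|\bar{h}_{b}^{a}|_{\bar{\gamma}_{t}}\leq C(t)$ on finite intervals follows by ODE comparison from the same evolution equation, and a $C^{2}$ bound on $v_{t}$ is then read off via the hessian--second-fundamental-form identity in Corollary \ref{cor:h-hessian}.

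For $k\geq 3$, I would run verbatim the induction at the end of the proof of Lemma \ref{lem:T0_3D}, replacing $\widetilde{D}$ by $\bar{D}$ and $\tilde{h}$ by $\bar{h}$, producing $\frac{d}{dt}\omega_{j}\leq C\omega_{j}+C$ for $\omega_{j}(t)=\max_{\Sigma_{t}}|\bar{D}^{j}\bar{h}|^{2}_{\bar{\gamma}}$ and concluding by ODE comparison on $[0,T)$. The main obstacle is the algebraic step in the $C^{2}$ estimate: one has to verify, directly from the explicit components of $\bar{R}_{\alpha\beta\lambda\mu}$, that $\phi^{4}\geq 1+2/\sqrt{3}$ is sharp for the sign of the sectional curvatures in $\bar{\nu}$-containing planes, and moreover that the axial symmetry in $\xi$ rules out the $\xi$-plane eigendirections (where the curvature-sign might fail) from participating in the touchdown argument for $\bar{h}_{b}^{a}$.
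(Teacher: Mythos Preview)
Your bootstrap is the paper's own strategy: after quoting Lemma~\ref{lem:C22_4D} for the $C^{2}$ bound, the paper simply says ``the proof of Lemma~\ref{lem:T0_4D} is similar to the one for Lemma~\ref{lem:T0_3D}.'' Your $C^{0}$, $C^{1}$, and higher-$C^{k}$ steps match Lemmas~\ref{lem:C00_4D}, \ref{lem:C11_4D}, and the $\omega_{j}$ induction verbatim.

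Your speculation at the $C^{2}$ step, however, misidentifies the role of the $\xi$-symmetry and of the threshold. Computing with \eqref{curvature-gt} for $\Psi=\phi^{-1}\frac{d\phi}{ds}$ and $\psi=2\log\phi+\log\frac{d\phi}{ds}$ gives
\begin{align*}
\overline{R}_{q\xi q\xi}&=-3\phi^{8}(1-\phi^{-4})^{3}, &
\overline{R}_{qi\,qi}&=-\phi^{8}\bigl(3-6\phi^{-4}-\phi^{-8}\bigr),\\
\overline{R}_{\xi i\,\xi i}&=-\phi^{8}(9-\phi^{-8})(1-\phi^{-4}), &
\overline{R}_{ij\,ij}&=-\phi^{8}(1-\phi^{-4})^{2}\quad(i\neq j).
\end{align*}
All components involving $\xi$ are non-positive unconditionally; the only questionable one is $\overline{R}_{qi\,qi}$, which lives in the $(q,\theta^{i})$-planes. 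Setting $x=\phi^{4}$, its sign is that of $-(3x^{2}-6x-1)$, and the larger root of $3x^{2}-6x-1$ is $1+2/\sqrt{3}$. Thus under \eqref{assumption_4D} \emph{every} sectional curvature of $\bar{g}$ is non-positive, so $\bar{h}_{b}^{a}\geq 0$ is preserved by the tensor maximum principle with no restriction on the eigendirection. The $\xi$-symmetry is not used here at all; its only role in the whole argument is in Lemma~\ref{lem:monotone}, where it lets one reduce the Gauss--Bonnet step to the two-dimensional slice $\mathring{\Sigma}_{t}$.
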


\begin{proof}[Proof of Theorem \ref{thm:4d}]
	Let $\Sigma_t$ be the solution of \eqref{normalflow_4D} starting from $\Sigma_0=\Sigma$. Combining Lemmas \ref{lem:monotone}, \ref{lem:T0_4D} and  \ref{lem:limitQ_4D},
	\begin{align*}
	Q(\Sigma)\leq \limsup_{t\to\infty}Q(\Sigma_t)\leq -2m.
	\end{align*}
		Furthermore, the equality implies $Q(\Sigma_t)$ is a constant along the flow. And Lemma \ref{lem:monotone} implies $\Sigma$ is a coordinate torus. 
\end{proof}
We prove Lemmas \ref{lem:limitQ_4D} and Lemma \ref{lem:T0_4D} in the rest of this section. We again adapt the convention that $C$ denotes a large constant depending on $\Sigma_0$. The value of $C$ may change from line to line.\\

Denote by $v_t(\xi,\theta)$ the height function of $\Sigma_t$ and $u_t =q(v_t)$. Then $v_t$ solves the equation
\begin{equation}
\frac{\partial v_t}{\partial t}=\rho \left( \phi(v_t)\frac{d\phi}{ds}(v_t)\right) ^{-1}.
\end{equation} 
Here $\rho$ is the slope of $\Sigma_t$ defined in \eqref{v}. Because $\Sigma_t$ are symmetric along the $\xi$ direction, the slope $\rho$ is given by
\begin{align*}
\rho^2=1+\delta^{ij}  \frac{\partial u_t}{\partial\theta^i}\frac{\partial u_t}{\partial\theta^j}.
\end{align*} 
We start with the $C^0$ and $C^1$ estimates for $v_t$ and $u_t$. 
\begin{lemma}\label{lem:C00_4D}
	There exists a constant $C $ depending on $\Sigma_0$ such that for all $t\in [0,T_0)$,
	\begin{equation}\label{C00_4D}
	\begin{split}
	|\phi^2(v_t)-2t|\leq C.
	\end{split}
	\end{equation} 
\end{lemma}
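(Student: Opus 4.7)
The plan is to exploit a favorable cancellation engineered into the weighted flow \eqref{normalflow_4D}. Applying the chain rule to $\phi^2(v_t)$ and using the graphical evolution $\frac{\partial v_t}{\partial t} = \rho \left(\phi \frac{d\phi}{ds}\right)^{-1}$, one immediately finds
\begin{equation*}
\frac{\partial}{\partial t}\phi^2(v_t) = 2\phi \cdot \frac{d\phi}{ds}(v_t) \cdot \frac{\partial v_t}{\partial t} = 2\rho,
\end{equation*}
since the weight in $\partial_t v_t$ cancels precisely the prefactor produced by differentiating $\phi^2$. Thus $\phi^2(v_t)$ propagates pointwise at rate exactly $2\rho$, with no residual dependence on $\phi$ itself. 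This is markedly cleaner than the analogous three-dimensional setting (Lemma \ref{lem:C00}), where no such exact cancellation occurs and a nontrivial ODE comparison was required.

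Next, I would apply Hamilton's trick to the spatial extrema. Since $\phi$ is strictly increasing in $s$, the minimum and maximum of $\phi^2(v_t)$ over $T^3$ are attained at points where $v_t$, hence $u_t$, has vanishing spatial gradient. By the axially symmetric slope formula displayed just before the lemma, $\rho^2 = 1 + \delta^{ij}\frac{\partial u_t}{\partial \theta^i}\frac{\partial u_t}{\partial \theta^j}$, this forces $\rho = 1$ at every such extremal point. Consequently
\begin{equation*}
\frac{d}{dt}\min_{T^3}\phi^2(v_t) = 2 = \frac{d}{dt}\max_{T^3}\phi^2(v_t),
\end{equation*}
and integrating in $t$ yields $\min_{T^3}\phi^2(v_t) = \min_{T^3}\phi^2(v_0) + 2t$ together with the analogous identity for the maximum.

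Sandwiching then gives $\phi^2(v_t)(\xi,\theta) - 2t \in [\min_{T^3}\phi^2(v_0),\, \max_{T^3}\phi^2(v_0)]$ for all $t \in [0,T_0)$, so \eqref{C00_4D} follows with $C$ the oscillation of $\phi^2(v_0)$ on the initial torus. I do not foresee any serious obstacle: the heart of the lemma is really the observation that the weight $\left(\phi\frac{d\phi}{ds}\right)^{-1}$ in \eqref{normalflow_4D} is tuned so that $\phi^2$ evolves at a constant rate at extrema. The only standard background fact required is Hamilton's trick for differentiating $\min$/$\max$ of a smooth family on a compact manifold, which is valid throughout the graphical regime $t < T_0$.
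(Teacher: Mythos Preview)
Your proof is correct and follows essentially the same route as the paper: both compute $\partial_t \phi^2(v_t)=2\rho$, use $\rho=1$ at spatial extrema, and integrate to sandwich $\phi^2(v_t)-2t$. One minor slip: from $\phi^2(v_t)-2t\in[\min_{T^3}\phi^2(v_0),\max_{T^3}\phi^2(v_0)]$ the constant $C$ should be $\max_{T^3}\phi^2(v_0)$ (as the paper takes it), not the oscillation---consider the case where $\phi^2(v_0)$ is constant.
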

\begin{proof}
	At the maximum point of $v_t$, we have $\rho=1$. Together with \eqref{equ:phi}, 
	$$\frac{d}{dt}\max_{T^2} \phi^2(v_t)\leq  2. $$
	Similarly,
	$$\frac{d}{dt}\min_{T^2} \phi^2(v_t)\geq  2. $$
	Thus the assertion follows by taking $C =\max_{T^2}\phi^2(v_0)$.
\end{proof}

\begin{lemma}\label{lem:C11_4D}
	There exists constant $C $ depending on $\Sigma_0$ such that for $t\in [0,T_0)$,
	\begin{equation}\label{C11_4D}
\max_{\Sigma_t}(	\rho^2-1) \leq  (C^{-1}t+1)^{-3} \max_{\Sigma_0}(\rho^2-1). 
	\end{equation}
\end{lemma}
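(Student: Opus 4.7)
The plan is to mirror the three dimensional argument in Lemma \ref{lem:C11}. Because $\Sigma_t$ is $\xi$-symmetric, the slope simplifies to $\rho^2 = 1 + \delta^{ij}\partial_i u_t\,\partial_j u_t$ with no $\xi$-derivative contribution, so the analysis is strictly easier than in the three dimensional case. I will derive an evolution equation for $\rho^2$ from the flow, apply the maximum principle, and close with ODE comparison using the height estimate from Lemma \ref{lem:C00_4D}.

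For the evolution equation, from $\partial_t v_t = \rho(\phi \phi')^{-1}$ (where $\phi' = d\phi/ds$) and $u_t = q(v_t)$ one computes $\partial_t u_t = \rho\phi^{-2}(\phi')^{-1}$. Differentiating in the $\theta^i$ directions, using $\partial_i v_t = \phi\,\partial_i u_t$ together with \eqref{equ:phi} and the identity $\phi'' = \phi(1+\phi^{-4})$ (which follows directly from differentiating \eqref{equ:phi} when $n=4$), yields
\begin{align*}
\partial_t\partial_i u_t = \phi^{-2}(\phi')^{-1}\partial_i \rho - \rho\phi^{-2}\,\frac{3-\phi^{-4}}{1-\phi^{-4}}\,\partial_i u_t.
\end{align*}
The key coefficient $(3-\phi^{-4})(1-\phi^{-4})^{-1}$ arises from the computation of $\frac{d}{ds}[\phi^{-2}(\phi')^{-1}]$ after substituting $(\phi')^{-2} = \phi^{-2}(1-\phi^{-4})^{-1}$.

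Contracting with $\partial_i u_t$ gives $\partial_t \rho^2$ in terms of $\partial_i\rho$ and $\rho^2 - 1$. At a point where $\rho^2$ attains its maximum, $d\rho = 0$ and the gradient term drops. Using $\rho \geq 1$ and the elementary inequality $(3-\phi^{-4})(1-\phi^{-4})^{-1}\geq 3$ (equivalent to $2\phi^{-4}\geq 0$, which is valid whenever $\phi > 1$, a condition preserved by the flow thanks to \eqref{assumption_4D} and the minimum principle argument of Lemma \ref{lem:C00_4D}), one obtains
\begin{align*}
\frac{d}{dt}\max_{\Sigma_t}(\rho^2 - 1)\leq -6\phi^{-2}\max_{\Sigma_t}(\rho^2 - 1).
\end{align*}
Lemma \ref{lem:C00_4D} provides $\phi^2 \leq 2t + C$, hence $\phi^{-2}\geq (2t+C)^{-1}$, and the ODE comparison
\begin{align*}
\frac{d}{dt}\max_{\Sigma_t}(\rho^2 - 1)\leq -\frac{3}{t + C/2}\max_{\Sigma_t}(\rho^2 - 1)
\end{align*}
integrates to the desired bound $\max_{\Sigma_t}(\rho^2 - 1)\leq (C^{-1}t + 1)^{-3}\max_{\Sigma_0}(\rho^2 - 1)$.

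The principal obstacle is purely algebraic: one must carefully verify the identity for $\frac{d}{ds}[\phi^{-2}(\phi')^{-1}]$ so that the resulting coefficient is exactly $(3-\phi^{-4})/(1-\phi^{-4})$, because the exponent $3$ in the decay rate is precisely the lower bound on this ratio. A smaller constant would only produce a weaker decay than that stated in the lemma, so there is no sharp tightness to exploit, and once the identity is in hand the remaining ODE comparison is a direct analogue of the three dimensional argument.
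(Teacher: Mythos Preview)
Your proof is correct and follows essentially the same approach as the paper: derive the evolution equation for $\partial_i u_t$, contract to get $\partial_t(\rho^2-1)$, drop the gradient term at the maximum, bound the remaining coefficient using $(3-\phi^{-4})/(1-\phi^{-4})\geq 3$ and the height estimate $\phi^2\leq 2t+C$, and integrate the resulting ODE. Your bookkeeping of the factor $2$ from $\partial_t(\rho^2-1)=2\delta^{ij}\partial_i u_t\,\partial_t\partial_j u_t$ (giving the coefficient $-6\phi^{-2}$) is in fact cleaner than the paper's presentation, which drops this factor but still arrives at the same $-3(t+C)^{-1}$ bound.
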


\begin{proof}
	
	From
	\begin{align*}
	\frac{\partial^2 u_t}{\partial x^a\partial t}=&\phi^{-2}\left(\frac{d\phi}{ds} \right)^{-1} \frac{\partial \rho}{\partial x^a}+\rho\phi^{-2} (-3+\phi^{-4})(1-\phi^{-4})^{-1} \frac{\partial u_t}{\partial x^a},  
	\end{align*}   
	we deduce
	\begin{align*}
	\frac{\partial  }{\partial t}(\rho^2-1)  =&2\phi^{-2}\left(\frac{d\phi}{ds} \right)^{-1} \delta^{ij}  \frac{\partial u_t}{\partial\theta^i}\frac{\partial \rho}{\partial\theta^j}+\rho\phi^{-2} (-3+\phi^{-4})(1-\phi^{-4})^{-1} (\rho^2-1).
	\end{align*}
	At the maximum point of $\rho^2-1$, $d\rho	=0$ and
	\begin{align*}
	\frac{\partial}{\partial t}(\rho^2  -1) \leq  &\rho\phi^{-2} (-3+\phi^{-4})(1-\phi^{-4})^{-1} (\rho^2-1).
	\end{align*}
	Using \eqref{C00_4D}, there exists a constant $C $ such that for all $t\in [0,T_0)$,
	\begin{align*}
	\rho\phi^{-2} (-3+\phi^{-4})(1-\phi^{-4})^{-1} \geq \phi^{-2} (-3+\phi^{-4})(1-\phi^{-4})^{-1} \geq -3(t+C )^{-1}.
	\end{align*}
 Hence
	\begin{align*}
	\frac{d}{dt}\max_{\Sigma_t} (\rho^2-1)\leq -3(t+C )^{-1}\max_{\Sigma_t} (\rho^2-1).
	\end{align*}
	By the ODE comparison, 
	\begin{align*}
	\max_{\Sigma_t}(\rho^2-1) \leq  (C^{-1}t+1)^{-3} \max_{\Sigma_0}(\rho^2-1).
	\end{align*}
	The proof is finished.
\end{proof}

Recall that $g'=\phi^{-2}g$ is a conformal metric which asymptotic to the flat metric on $\mathbb{R}\times T^3$. Let $\gamma'_t$ be the induced metric of $g'$ on $\Sigma_t$ and $D'$ be the Levi-Civita connection of $\gamma'_t$. Lemmas \ref{lem:C00_4D} and \ref{lem:C11_4D} imply, provided $T_0=\infty$, $\gamma'_t$ converges the the flat metric $d\xi^2+ d\theta^2$ in $C^0$. Next, we use the second fundamental form $h'_{ab}$ to bound the hessian of $u_t$ from below. 

\begin{lemma}
	Suppose $T_0=\infty.$ Then there exists a constant $C$ depending on $\Sigma_0$ such that  for $t\geq C+1$,
	\begin{align*}
	D'_b(D')^a u_t \geq -  \left( C(t-C)^{-3/2}\log t\right) \delta_b^a. 
	\end{align*}
\end{lemma}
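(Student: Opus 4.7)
\end{lemma}

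The argument parallels the proof of Lemma \ref{lem:C44}, with exponents adapted to dimension four. First, rewrite the flow \eqref{normalflow_4D} as a unit normal flow with respect to $g' = \phi^{-2}g$. Since $\nu = \phi^{-1}\nu'$, one has
\[
\frac{\partial F}{\partial t} = f\,\nu',\qquad f := \left(\phi^{2}\,\tfrac{d\phi}{ds}\right)^{-1} = \phi^{-3}(1-\phi^{-4})^{-1/2},
\]
and the evolution equation for the Weingarten tensor reads
\[
\frac{\partial}{\partial t}(h')_b^a = -f\,(h')_b^c (h')_c^a - D'_b(D')^a f - f\,(dx^a)_\alpha (\nu')^\beta \left(\frac{\partial}{\partial x^b}\right)^\lambda (\nu')^\mu\,(R')^\alpha_{\ \beta\lambda\mu}.
\]
Using the appendix curvature formulas with $\Psi = (1-\phi^{-4})^{1/2}$, together with Lemma \ref{lem:C00_4D}, every nonzero component of $R'$ is $O(\phi^{-2}) = O(t^{-1})$, so the curvature contribution to the evolution is $O(t^{-5/2})$.

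Next, compute $-D'_b(D')^a f$ via the chain rule. Because $f$ depends on position only through $v_t$ and $D'_a u_t = \phi^{-1} D'_a v_t$, one finds
\[
D'_b D'_a f = \tfrac{df}{ds}\,\phi\, D'_b D'_a u_t + \left(\tfrac{d^{2} f}{ds^{2}}\,\phi^{2} + \tfrac{df}{ds}\,\tfrac{d\phi}{ds}\,\phi\right) D'_b u_t\, D'_a u_t,
\]
and a direct calculation yields $\tfrac{df}{ds}\phi = -3\phi^{-2} - 2\phi^{-6}(1-\phi^{-4})^{-1}$. Substituting the analog of \eqref{hessianu3D} derived from Corollary \ref{cor:h-hessian},
\[
D'_b D'_a u_t = -\rho^{-1} h'_{ab} + E_{ab},
\]
where the $\xi$-warping error $E_{ab}$ satisfies $|E_{ab}|_{\gamma'_t} = O(t^{-2})$ by Lemma \ref{lem:C00_4D}, and using $|D'_b u_t\,D'_a u_t|_{\gamma'_t} = O(t^{-3})$ from Lemma \ref{lem:C11_4D}, the largest eigenvalue $w_t$ of $(h')_b^a$ obeys, for $t$ large enough,
\[
\frac{d w_t}{dt} \leq -\frac{3}{2(t+C)}\,w_t + C\,(t-C)^{-5/2}.
\]
Multiplying by the integrating factor $(t+C)^{3/2}$ and integrating yields $w_t \leq C(t-C)^{-3/2}\log t$. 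Since $(h')_b^a \leq w_t\,\delta_b^a$ and $|E_b^a|$ decays faster than $t^{-3/2}\log t$, the Hessian--Weingarten relation above delivers the claimed lower bound on $D'_b(D')^a u_t$.

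The delicate point is verifying that the leading coefficient of $(h')_b^a$ in the ODE is exactly $\tfrac{3}{2t}(1+o(1))$, since any discrepancy in this prefactor would alter the $-3/2$ exponent in the final estimate; this relies on the $\xi$-symmetry of $\Sigma_t$ (so that $D'_\xi u_t$ vanishes and the warping corrections take the predicted form), on Lemma \ref{lem:C11_4D} for the rate $\rho \to 1$, and on the explicit form \eqref{equ:phi} of $\phi$ in dimension four. The higher-order $C^k$ estimates $(k\geq 2)$ needed later in the proof of Lemma \ref{lem:T0_4D} will then follow by an induction directly modelled on the one at the end of the proof of Lemma \ref{lem:T0_3D}.
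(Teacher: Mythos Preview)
Your proof is correct and follows essentially the same route as the paper: rewrite \eqref{normalflow_4D} as a flow with speed $f=(\phi^2\frac{d\phi}{ds})^{-1}$ in the $g'$ metric, use the standard evolution equation for $(h')_b^a$, bound the curvature and lower-order terms by $O(t^{-5/2})$, and integrate the resulting differential inequality $\frac{dw_t}{dt}\le -\tfrac{3}{2}(t+C)^{-1}w_t + C(t-C)^{-5/2}$ for the top eigenvalue. One minor slip: the $\xi$-warping error $E_{ab}=2\phi^{-3}(1-\phi^{-4})^{1/2}D'_a\xi D'_b\xi$ (this is the paper's \eqref{hessianu4D}) has norm $O(\phi^{-3})=O(t^{-3/2})$, not $O(t^{-2})$; this is harmless, since multiplied by $\tfrac{df}{ds}\phi=O(t^{-1})$ it still contributes $O(t^{-5/2})$ to the evolution, and in the final step $t^{-3/2}=o(t^{-3/2}\log t)$ (in fact $E_b^a\ge 0$, so it can simply be dropped).
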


\begin{proof}
The flow \eqref{normalflow_4D} can be rewritten as
	\begin{align*}
	\frac{\partial F }{\partial t}= \left(\phi^{2} \frac{d\phi}{ds}\right)^{-1} \nu'.
	\end{align*} 
	Here $\nu'=\phi\nu$ is the unit normal vector with respect to $g'$. Let $G= \phi^{2} \frac{d\phi}{ds} $. Then the evolution equation of $(h')_b^a$ is given by
	\begin{align*}
	\frac{\partial}{\partial t}(h')_b^a=-G^{-1}(h')_b^c(h')_c^a-D'_b(D')^a G^{-1}-G^{-1} (dx^a)_\alpha (\nu')^\beta \left(\frac{\partial}{\partial x^b} \right)^\lambda (\nu')^\mu   ({R}')^\alpha_{\ \beta\lambda\mu}.
	\end{align*}
	Here $(R')_{\alpha\beta\lambda\mu}$ is the Riemannian curvature tensor of $g'$. In the estimates below, the norms are computed with respect to $\gamma'_t$. From \eqref{curvature-g'} and \eqref{equ:phi}, the only non-zero component of $R'$ is
	\begin{align*}
	R'_{q\xi q\xi}=6\phi^{-2}(1-\phi^{-4}).
	\end{align*} 
	Together with \eqref{C00_4D}, for $t$ large enough,
	\begin{align*}
	\left| -G^{-1} (dx^a)_\alpha (\nu')^\beta \left(\frac{\partial}{\partial x^b} \right)^\lambda (\nu')^\mu   ({R}')^\alpha_{\ \beta\lambda\mu} \right|\leq C(t-C)^{-5/2}.
	\end{align*} 
	Using \eqref{equ:q},
	\begin{align*}
	-D'_b(D')^a G^{-1} =&G^{-2}\frac{d G}{dq}  D'_b(D')^a {u}_t+\frac{d}{dq}\left( G^{-2}\frac{dG}{dq}\right) D'_b {u}_t (D')^a{u}_t.
	\end{align*} 
 The relation between $h'_{ab}$ and $D'_bD'_a u_t$ is given in Corollary \ref{cor:h-hessian} with $\Psi=\phi^{-1}\frac{d\phi}{ds}$. Using \eqref{equ:phi}, we derive
	\begin{align}\label{hessianu4D}
	D'_bD'_a u_t=-\rho^{-1}h'_{ab}+2\phi^{-3}(1-\phi^{-4})^{1/2}D'_b \xi D'_a\xi.
	\end{align}
	Together with
	\begin{align*}
	G^{-2}\frac{d G}{dq}=&\phi^{-2}(3-\phi^{-4})(1-\phi^{-4})^{-1},\\
	\frac{d}{dq}\left( G^{-2}\frac{dG}{dq}\right)=&-2\phi^{-1}(3+\phi^{-8})(1-\phi^{-4})^{-3/2},
	\end{align*}
	we derive
	\begin{align*}
	-D'_b(D')^aG^{-1}=&-\rho^{-1}\phi^{-2}(3-\phi^{-4})(1-\phi^{-4})^{-1}  (h')_b^a-2\phi^{-1}(3+\phi^{-8})(1-\phi^{-4})^{-3/2}  D'_b {u}_t (D')^a{u}_t\\
	&  +2\phi^{-5}(3-\phi^{-4})(1-\phi^{-4})^{-1/2}   D'_b \xi  (D')^a \xi .
	\end{align*}
	From \eqref{C00} and \eqref{C11}, for $t$ large enough,
	\begin{align*}
	&  |2\phi^{-5}(3-\phi^{-4})(1-\phi^{-4})^{-1/2}   D'_b\xi (D')^a  \xi |\leq  C(t-C)^{-5/2}. 
	\end{align*}
	Putting the above together, we derive
	\begin{align*}
	\bigg|\frac{\partial}{\partial t}(h')_b^a+\rho^{-1}\phi^{-2}&(3-\phi^{-4})(1-\phi^{-4})^{-1}  (h')_b^a+G^{-1}(h')_b^c(h')_c^a \\
	 &+2\phi^{-1}(3+\phi^{-8})(1-\phi^{-4})^{-3/2}  D'_b {u}_t (D')^a{u}_t \bigg|\leq C(t-C)^{-5/2}.
	\end{align*}
	Let $w_t$ be the maximum eigenvalue of $(h')_a^b$. From \eqref{C00} and \eqref{C11}, for $t$ large enough,
	\begin{align*}
		&\rho^{-1} \phi^{-2}(3-\phi^{-4})(1-\phi^{-4}) \geq  \frac{3}{2} (t+C)^{-1}. 
	\end{align*} We derive
	\begin{align*}
	\frac{d w_t}{d t}\leq -\frac{3}{2}(t+C)^{-1} w_t+C(t-C)^{-5/2}.
	\end{align*}
	From the ODE comparison, $w_t\leq C(t-C)^{-3/2}\log t $ for $t$ large enough. Hence the assertion follows \eqref{hessianu4D}.
\end{proof}

\begin{proof}[Proof of Lemma \ref{lem:limitQ_4D}]
	From \eqref{limit},
	\begin{align*}
	Q(\Sigma_t)+2m=&\int_{T^3 }  \rho^2\phi \frac{d\phi}{ds}\Delta' u_t- 2\delta^{ij}  \frac{\partial u_t}{\partial\theta^i}\frac{\partial u_t}{\partial\theta^j}  \, d\xi\wedge d\theta^3\wedge d\theta^4. 
	\end{align*}
	From Lemma \ref{lem:C11_4D}, $\delta^{ij}  \frac{\partial u_t}{\partial\theta^i}\frac{\partial u_t}{\partial\theta^j} \leq C(t-C)^{-3}$. It remains to prove
	\begin{align*}
	\limsup_{t\to\infty} \int_{T^3}  \rho ^2\phi \frac{d\phi}{ds}\Delta' u_t \, d\xi\wedge d\theta^3\wedge d\theta^4 \leq 0. 
	\end{align*}
	Recall that $\gamma'_t$ is the induced metric of $\Sigma$ from $g'$. Because $$d\textup{vol}_{\gamma'_t}=\rho \phi^{-1}\frac{d\phi}{ds}d\xi\wedge d\theta^3\wedge d\theta^4,$$
	\begin{align*}
	&\int_{ {T}^{3}}  \rho^2\phi \frac{d\phi}{ds}\Delta' u_t d\xi\wedge d\theta^3\wedge d\theta^4 =  \int_{\Sigma_t} \rho \phi^2  \Delta' u_t \, d\textup{vol}_{\gamma'_t}=\underbrace{ \int_{\Sigma_t}  (\rho-1) \phi^2  \Delta' u_t \, d\textup{vol}_{\gamma'_t} }_{\textsc{I}}+\underbrace{ \int_{\Sigma_t}   \phi^2  \Delta' u_t \, d\textup{vol}_{\gamma'_t} }_{\textsc{II}}.
	\end{align*}
	From Lemmas \ref{lem:C00_4D}, \ref{lem:C11_4D} and \ref{lem:C22_4D}, $\phi^2\leq Ct$, $|\rho-1|\leq Ct^{-3}$ and
	\begin{align*}
	\int_{\Sigma_t}   |\Delta' u_t| \, d\textup{vol}_{\gamma'}=2\int_{\Sigma_t}   (\Delta' u_t)_- \, d\textup{vol}_{\gamma'}\leq C t^{-3/2}\log t.
	\end{align*}  
	Hence the first term $\textsc{I}$ goes to zero. Through integration by parts,
	\begin{align*}
	\textsc{II}=-\int_{\Sigma_t} 2\phi \frac{d\phi}{ds} |D' u_t|^2_{\gamma'_t} \, d\textup{vol}_{\gamma'}\leq 0.
	\end{align*}
	Thus \eqref{limitQ_4D} follows.
\end{proof} 

Lastly, we prove Lemma \ref{lem:T0_4D}. Recall that $\bar{g}=\left(\phi\frac{d\phi}{ds} \right)^2 g$ and that $\bar{h}_b^a$ is the Weingarten tensor with respect to $\bar{g}$. The flow \eqref{normalflow_4D} is the unit normal flow with respect to $\bar{g}$. The evolution equation of $\bar{h}_b^a$ is given by
\begin{equation}\label{equ:hb}
\frac{\partial}{\partial t}\bar{h}_b^a=-\bar{h}_b^c\bar{h}_c^a-(dx^a)_\alpha \bar{\nu}^\beta \left(\frac{\partial}{\partial x^b} \right)^\lambda \bar{\nu}^\mu  \overline{R}^\alpha_{\ \beta\lambda\mu}.
\end{equation}

\begin{lemma} \label{lem:C22_4D}
	Under the assumption \eqref{assumption_4D},  there exists a continuous function $C (t)$ defined on $[0,\infty)$ such that for all $t\in [0,T_0)$,
	\begin{align*}
	|\bar{h}_b^a|_{\bar{\gamma}_t}\leq |\bar{h}_b^a|_{\bar{\gamma}_0}+C(t) 
	\end{align*}
\end{lemma}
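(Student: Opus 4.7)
The strategy parallels Lemma \ref{lem:C22}. First I would apply the Appendix formulas \eqref{curvature-gt} with $\Psi = \phi^{-1}\,d\phi/ds$ and $\psi = 2\log\phi + \log (d\phi/ds)$ to compute the non-zero components of $\overline{R}_{\alpha\beta\lambda\mu}$ for $\bar{g}$ explicitly as rational functions of $\phi$, using \eqref{equ:phi} to eliminate $s$-derivatives of $\phi$. The four-dimensional case produces several families of sectional curvatures---those of the $(q,\xi)$, $(q,\theta^i)$, $(\xi,\theta^i)$, and $(\theta^i,\theta^j)$ planes---each a rational function of $\phi^{-4}$.

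The central computational claim is that every sectional curvature of $\bar{g}$ is non-positive whenever $\phi^4 \geq 1 + 2/\sqrt{3}$; this threshold is the largest root among the curvature numerators (compare the three-dimensional case in Lemma \ref{lem:C22}, where the analogous components are non-positive for all $\phi>1$). To apply the pointwise condition I need it to persist along the flow, which is automatic: \eqref{normalflow_4D} gives $\partial v_t/\partial t = \rho(\phi\,d\phi/ds)^{-1} > 0$ at every point, so $v_t$, and hence $\phi(v_t)$, is monotone non-decreasing in $t$, whence $\min_{\Sigma_t}\phi^4 \geq \min_{\Sigma_0}\phi^4 \geq 1+2/\sqrt{3}$ for all $t\in[0,T_0)$.

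With non-positive sectional curvatures of $\bar{g}$ and $\bar{h}_{ab}\geq 0$ initially, the tensor maximum principle applied to \eqref{equ:hb}---as in the three-dimensional argument---preserves $\bar{h}_{ab}\geq 0$ along the flow: at any first-time failure of definiteness, the term $-\bar{h}_b^{\,c}\bar{h}_c^{\,a}$ vanishes on the degenerate direction and the only remaining driver is the sectional curvature, whose sign prevents the smallest eigenvalue from crossing zero. Finally, Lemma \ref{lem:C00_4D} yields a continuous upper bound $C(t)$ on $|\overline{R}(\cdot,\bar{\nu},\cdot,\bar{\nu})|_{\bar{\gamma}_t}$, and substituting into \eqref{equ:hb} at a point where $w_t=\lambda_{\max}(\bar{h}^a_b)$ is attained gives
\[
\frac{d w_t}{dt} \leq -w_t^2 + C(t),
\]
so ODE comparison furnishes a continuous upper bound on $w_t$, hence on $|\bar{h}_b^a|_{\bar{\gamma}_t}$, in any finite time interval.

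The main obstacle is the bookkeeping verification that $\phi^4 \geq 1 + 2/\sqrt{3}$ is indeed the sharp threshold arising from the most constraining component of \eqref{curvature-gt}. Everything else---monotonicity of $\phi(v_t)$, preservation of $\bar{h}_{ab}\geq 0$ by the maximum principle, and the ODE bound on the largest eigenvalue---is a direct adaptation of the three-dimensional template, so the computational heart of the proof lies in identifying and controlling the sign of each curvature numerator as a polynomial in $\phi^{-4}$.
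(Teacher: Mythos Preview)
Your proposal is correct and follows essentially the same approach as the paper. The paper computes the four curvature components explicitly, identifies $\overline{R}_{qiqi}=-\phi^8(3-6\phi^{-4}-\phi^{-8})$ as the only one whose sign is in doubt, observes that $3x^2-6x-1$ has roots $1\pm 2/\sqrt{3}$ so that $\phi^4\ge 1+2/\sqrt{3}$ forces all sectional curvatures non-positive, and then invokes \eqref{C00_4D} and ODE comparison exactly as you describe; your added justification that $\min_{\Sigma_t}\phi$ is non-decreasing along the flow makes explicit a step the paper leaves implicit.
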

\begin{proof}
	Using \eqref{curvature-gt} with $\Psi=\phi^{-1}\frac{d\phi}{ds}$ and $\psi=2\log\phi+\log\frac{d\phi}{ds}$, we derive that non-zero components of $\overline{R}_{\alpha\beta\lambda\mu}$ are 
	\begin{align*}
	\overline{R}_{q\xi q\xi}=&-3\phi^{8}(1-\phi^{-4})^3 ,\\
	\overline{R}_{qi  qi}=&-\phi^8(3-6\phi^{-4}-\phi^{-8}) ,\\
	\overline{R}_{\xi i \xi i}=&-\phi^{8}(9-\phi^{-8})(1-\phi^{-4}) ,\\
	\overline{R}_{ijij}=&-\phi^8(1-\phi^{-4})^2,\ i\neq j.
	\end{align*}
Besides $\overline{R}_{qi  qi}$, other sectional curvatures are apparently non-positive. Actually, the polynomial $3x^2-6x-1$ has roots $x=1\pm\frac{2}{\sqrt{3}}$. Under the assumption \ref{assumption_4D}, we have $\phi^4\geq 1+\frac{2}{\sqrt{3}}$ along the flow. Hence the term
	$$-(dx^a)_\alpha \bar{\nu}^\beta \left(\frac{\partial}{\partial x^b} \right)^\lambda \bar{\nu}^\mu  \overline{R}^\alpha_{\ \beta\lambda\mu}$$
	is non-negative definite and $\bar{h}_b^a$ also remains non-negative definite . On the other hand, from \eqref{C00_4D}, for any $t\in [0,T_0)$,
	\begin{align*}
	\left| (dx^a)_\alpha \bar{\nu}^\beta \left(\frac{\partial}{\partial x^b} \right)^\lambda \bar{\nu}^\mu  \overline{R}^\alpha_{\ \beta\lambda\mu} \right|\leq C(t).
	\end{align*}
	for some function $C(t)$. Then the assertion follows by the ODE comparison. 
\end{proof}  
With Lemma \ref{lem:C22_4D}, the proof of Lemma \ref{lem:T0_4D} is similar to the one for Lemma \ref{lem:T0_3D}.
\begin{appendix}
\section{Curvature and Second Fundamental Form}\label{sec:app1}
	In this appendix, we record formulas of curvatures and second fundamental forms. Let $\Psi(q)$ be a smooth and positive function. Consider the metric 
	\begin{align}\label{def:g'}
	g'=dq^2+\Psi(q)^2d\xi^2+\sum_{i=3}^n (d\theta^i)^2. 
	\end{align}
	The non-zero components of the Christoffel symbols are
	\begin{align}\label{Christ-gp}
	(\Gamma')_{q\xi}^\xi=\Psi^{-1}\frac{d\Psi}{d q},\ (\Gamma')_{\xi\xi}^q=-\Psi\frac{d\Psi}{d q}.
	\end{align}
	The only non-zero component of the curvature is
	\begin{align}\label{curvature-g'}
	R'_{q\xi q\xi}=-\Psi\frac{d^2 \Psi}{dq^2}.
	\end{align}
	
	We now consider a hypersurface given by a graph over $T^{n-1}$. Let $u(\xi,\theta^i)$ be a smooth function and consider the map $F(\xi,\theta^i)=(u(\xi,\theta^i),\xi,\theta^i)$. Denote the image of $F$ by $\Sigma$. 
	\begin{align*}
	\frac{\partial F}{\partial \xi}=\frac{\partial }{\partial \xi}+\frac{\partial u}{\partial \xi}\frac{\partial}{\partial q},\ \frac{\partial F}{\partial \theta^i}=\frac{\partial }{\partial \theta^i}+\frac{\partial u}{\partial \theta^i}\frac{\partial}{\partial q}.
	\end{align*}  
	Let $\{x^a\}=\{\xi,\theta^i\}$ and define a metric on $T^{n-1}$ as
	\begin{equation}
	\hat{\gamma}_{ab}dx^a dx^b= \Psi(u)^2 d\xi^2+\delta_{ij}d\theta^i d\theta^j. 
	\end{equation}
	The induced metric $\gamma'_{ab}$ on $\Sigma$ is given by
	\begin{align}
	\gamma'_{ab}=\hat{\gamma}_{ab}+\frac{\partial u}{\partial x^a}\frac{\partial u}{\partial x^b}. 
	\end{align}
	The normal vector of $\Sigma$ is given by
	\begin{align}\label{normal-gp}
	\nu'=\rho^{-1}\left(\frac{\partial}{\partial q}-\frac{1}{ \Psi(u)^{2}}\frac{\partial u}{\partial\xi}\frac{\partial }{\partial\xi}-\delta^{ij} \frac{\partial u}{\partial\theta^i}\frac{\partial  }{\partial\theta^j} \right), 
	\end{align}
	where $\rho$ is the slope given by
	\begin{align}
	\rho^2= 1+\Psi(u)^{-2}\left(\frac{\partial u}{\partial\xi} \right)^2+\delta^{ij}\frac{\partial u}{\partial\theta^i}\frac{\partial u}{\partial\theta^j}. 
	\end{align}
	\begin{lemma}\label{lemma:A1}
		The second fundamental form of $\Sigma$ is given by
		\begin{align*}
		h'_{\xi\xi}=&-\rho^{-1}\frac{\partial^2 u}{\partial \xi^2}+2\rho^{-1}\Psi^{-1}\frac{d\Psi}{dq}\left(\frac{\partial u}{\partial \xi} \right)^2+\rho^{-1}\Psi\frac{d\Psi}{d q},\\
		h'_{ij}=&-\rho^{-1}\frac{\partial^2 u}{\partial \theta^i\partial\theta^j },\\
		h'_{\xi i}=&-\rho^{-1}\frac{\partial^2 u}{\partial \xi\partial\theta^i }+\rho^{-1}\Psi^{-1}\frac{d \Psi}{dq}\frac{\partial u}{\partial\xi}\frac{\partial u}{\partial\theta^i}.
		\end{align*}
	\end{lemma}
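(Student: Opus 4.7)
The plan is to compute $h'_{ab}$ directly from the definition $h'_{ab}=-g'(\nu',\nabla'_{\partial_a F}\partial_b F)$, using the ambient Christoffel symbols recorded in \eqref{Christ-gp} and the normal given in \eqref{normal-gp}. No deep ingredient is needed — the argument is a bookkeeping exercise.

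First I would write down the pushforwards of the coordinate vector fields on $T^{n-1}$, namely $\partial_\xi F=\partial_\xi+u_\xi\partial_q$ and $\partial_{\theta^i} F=\partial_{\theta^i}+u_{\theta^i}\partial_q$, and observe that for the embedding $F(\xi,\theta^i)=(u(\xi,\theta^i),\xi,\theta^i)$ the only nontrivial second partials of the component functions are the second partials of $u$, all landing in the $q$-slot. Consequently
$$(\nabla'_{\partial_a F}\partial_b F)^\mu = \frac{\partial^2 u}{\partial x^a\partial x^b}\,\delta^\mu_q + (\Gamma')^\mu_{\alpha\beta}(\partial_a F)^\alpha (\partial_b F)^\beta.$$

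Next, using that the only non-vanishing Christoffel symbols are $(\Gamma')^\xi_{q\xi}=\Psi^{-1}\Psi'$ and $(\Gamma')^q_{\xi\xi}=-\Psi\Psi'$ (writing $\Psi'=d\Psi/dq$), I would evaluate the three cases:
\begin{align*}
(\nabla'_{\partial_\xi F}\partial_\xi F)^q &= u_{\xi\xi}-\Psi\Psi',\qquad (\nabla'_{\partial_\xi F}\partial_\xi F)^\xi = 2u_\xi \Psi^{-1}\Psi',\\
(\nabla'_{\partial_{\theta^i}F}\partial_{\theta^j}F)^q &= u_{\theta^i\theta^j},\qquad\text{other components vanish},\\
(\nabla'_{\partial_\xi F}\partial_{\theta^i} F)^q &= u_{\xi\theta^i},\qquad (\nabla'_{\partial_\xi F}\partial_{\theta^i} F)^\xi = u_{\theta^i}\Psi^{-1}\Psi'.
\end{align*}

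Finally, I would pair with $\nu'=\rho^{-1}\bigl(\partial_q-\Psi^{-2}u_\xi\partial_\xi-\delta^{ij}u_{\theta^j}\partial_{\theta^i}\bigr)$ using $g'_{qq}=1$, $g'_{\xi\xi}=\Psi^2$, $g'_{\theta^i\theta^j}=\delta_{ij}$. The $q$-entries contribute $\rho^{-1}$ times the $q$-component, the $\xi$-entries contribute $\Psi^2\cdot(-\rho^{-1}\Psi^{-2}u_\xi)$ times the $\xi$-component (hence $-\rho^{-1}u_\xi$ times the $\xi$-component), and the $\theta^i$-entries contribute nothing because the covariant derivatives have no $\theta^i$-part. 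Negating (for the outward-normal sign convention) and collecting terms yields precisely
$$h'_{\xi\xi}=-\rho^{-1}u_{\xi\xi}+2\rho^{-1}\Psi^{-1}\Psi'u_\xi^2+\rho^{-1}\Psi\Psi',\quad h'_{ij}=-\rho^{-1}u_{\theta^i\theta^j},\quad h'_{\xi i}=-\rho^{-1}u_{\xi\theta^i}+\rho^{-1}\Psi^{-1}\Psi'u_\xi u_{\theta^i}.$$

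There is no real obstacle; the only point requiring care is the sign coming from the outward-normal convention and the cancellation $(-\Psi\Psi')\cdot 1+\Psi^2\cdot(-\Psi^{-2}u_\xi)\cdot(2u_\xi\Psi^{-1}\Psi')=-\Psi\Psi'-2u_\xi^2\Psi^{-1}\Psi'$ that produces the $+\rho^{-1}\Psi\Psi'$ and $+2\rho^{-1}\Psi^{-1}\Psi'u_\xi^2$ contributions in $h'_{\xi\xi}$ after the overall minus sign is applied.
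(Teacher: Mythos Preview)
Your proof is correct and follows essentially the same approach as the paper: compute $\nabla'_{\partial_a F}\partial_b F$ using the Christoffel symbols \eqref{Christ-gp}, then pair with the normal $\nu'$ from \eqref{normal-gp}. The paper's version is slightly terser but the computations are identical.
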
 
	\begin{proof}
		Let $\nabla'$ be the Levi-Civita connection of $g'$. From \eqref{Christ-gp}, 
		\begin{align*}
		\nabla'_{\frac{\partial F}{\partial\xi}}\frac{\partial F}{\partial\xi}=&\left( \frac{\partial^2 u}{\partial \xi^2}-\Psi\frac{d\Psi}{dq} \right)\frac{\partial}{\partial q}+2\Psi^{-1}\frac{d\Psi}{dq}\frac{\partial u}{\partial\xi}\frac{\partial}{\partial \xi},\\
		\nabla'_{\frac{\partial F}{\partial\theta^i}}\frac{\partial F}{\partial\theta^j} =&\frac{\partial^2 u}{\partial\theta^i\theta^j}  \frac{\partial}{\partial q}, \\
		\nabla'_{\frac{\partial F}{\partial\theta^i}}\frac{\partial F}{\partial\xi}= & \frac{\partial^2 u}{\partial \xi\partial\theta^i } \frac{\partial}{\partial q}+ \Psi^{-1}\frac{d\Psi}{dq}\frac{\partial u}{\partial\theta^i}\frac{\partial}{\partial \xi}.
		\end{align*}
		Taking inner product with $\nu'$ given in \eqref{normal-gp}, the assertion follows.
	\end{proof}
	We now view $u$ as a function defined on $\Sigma$ and compute its hessian. Let $D'$ be the Levi-Civita connection of $\gamma'$.
	\begin{lemma}\label{lemma:A2}
		\begin{align*}
		D_\xi'D_\xi' u=&\rho^{-2}\frac{\partial \tilde{u}}{\partial \xi^2}   - 2\rho^{-2}\Psi^{-1}\frac{d\Psi}{dq} \left( \frac{\partial u}{\partial \xi}\right)^2 -\rho^{-2}\Psi\frac{d\Psi}{dq}+  \Psi\frac{d\Psi}{dq},\\
		D_i'D_j' u=&\rho^{-2}\frac{\partial^2 u}{\partial\theta^i\partial\theta^j},\\
		D_\xi'D_i' u=&\rho^{-2}\frac{\partial^2 u}{\partial\xi\partial\theta^i}-\rho^{-2}\Psi^{-1}\frac{d\Psi}{dq} \frac{\partial u}{\partial\xi}\frac{\partial u}{\partial\theta^i}.
		\end{align*}
	\end{lemma}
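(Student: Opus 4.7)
The plan is to view $u$ as the restriction to $\Sigma$ of the ambient coordinate function $q$ and to apply the Gauss-type identity
$$
D'_a D'_b (f|_\Sigma) \;=\; (\textup{Hess}'_M f)(X_a, X_b) \;-\; h'_{ab}\, df(\nu'),
$$
valid for every smooth $f$ on $M$, with $f=q$. Here $X_a=\partial F/\partial x^a$ are the coordinate tangent vectors. The sign of the normal term is the one compatible with the sign convention baked into Lemma \ref{lemma:A1}, which the proof of that lemma fixes by taking $h'_{ab}=\langle \nabla'_{X_a}X_b,\nu'\rangle$ and reading off the normal component of $\nabla'_{X_a}X_b$; decomposing $\nabla'_{X_a}X_b$ into its tangential and normal parts and pairing with $df$ reproduces the identity above with the displayed sign.

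The two ambient ingredients are immediate. From the Christoffel symbols in \eqref{Christ-gp}, the only non-vanishing component of $\textup{Hess}'_M q$ is $(\textup{Hess}'_M q)_{\xi\xi}=-\Gamma'^q_{\xi\xi}=\Psi\,d\Psi/dq$, so that after pairing with $X_\xi=\partial_\xi+(\partial u/\partial\xi)\partial_q$ and $X_i=\partial_{\theta^i}+(\partial u/\partial\theta^i)\partial_q$ only the $(\xi,\xi)$ entry survives and equals $\Psi\,d\Psi/dq$. From \eqref{normal-gp} one reads off $dq(\nu')=\rho^{-1}$. Substituting these together with the formulas for $h'_{\xi\xi}$, $h'_{\xi i}$, $h'_{ij}$ from Lemma \ref{lemma:A1} into the Gauss identity yields
$$
D'_\xi D'_\xi u \;=\; \Psi\,\tfrac{d\Psi}{dq} - \rho^{-1}h'_{\xi\xi},\qquad D'_\xi D'_i u \;=\; -\rho^{-1}h'_{\xi i},\qquad D'_i D'_j u \;=\; -\rho^{-1}h'_{ij},
$$
and a direct algebraic expansion of the $h'_{ab}$ terms gives precisely the three expressions in the statement.

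The only genuine difficulty I anticipate is sign bookkeeping: one must ensure that the sign in the Gauss identity matches the convention adopted in Lemma \ref{lemma:A1}. The surviving $+\Psi\,d\Psi/dq$ term in $D'_\xi D'_\xi u$ is precisely the ambient hessian contribution, which reflects the fact that $q$ is a warped-product radial coordinate and therefore not metric-harmonic with respect to $g'$; the absence of such a correction in the other two entries is forced by the vanishing of $\textup{Hess}'_M q$ in all other slots. Once the sign convention is pinned down, the lemma collapses to a one-line substitution, with no further obstacles.
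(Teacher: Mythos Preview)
Your approach is correct and genuinely different from the paper's. The paper does not invoke the ambient Hessian of $q$ or the Gauss formula at all; instead it introduces the auxiliary metric $\hat{\gamma}_{ab}=\Psi(u)^2\, d\xi^2+\delta_{ij}\,d\theta^id\theta^j$ on $T^{n-1}$, computes its Christoffel symbols explicitly, and exploits the rank-one relation $\gamma'_{ab}=\hat\gamma_{ab}+\partial_au\,\partial_bu$ to establish $D'_aD'_b u=\rho^{-2}\hat D_a\hat D_b u$, from which the three formulas follow by direct substitution. Your route instead recycles Lemma~\ref{lemma:A1} via the submanifold Hessian identity, which is conceptually cleaner and has the pleasant side effect of yielding Corollary~\ref{cor:h-hessian} immediately as the intermediate relation $D'_aD'_b u=-\rho^{-1}h'_{ab}+\Psi\tfrac{d\Psi}{dq}\,D'_a\xi\,D'_b\xi$, rather than by comparing two separately computed lemmas. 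The paper's approach, in turn, is self-contained and does not logically rely on Lemma~\ref{lemma:A1}.

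One caveat on your sign discussion: the convention actually fixed in the proof of Lemma~\ref{lemma:A1} is $h'_{ab}=-g'(\nabla'_{X_a}X_b,\nu')$, not the $+$ sign you state. Indeed, from $\nabla'_{X_i}X_j=(\partial^2 u/\partial\theta^i\partial\theta^j)\,\partial_q$ and $g'(\partial_q,\nu')=\rho^{-1}$ one gets $g'(\nabla'_{X_i}X_j,\nu')=+\rho^{-1}\partial^2_{ij}u$, whereas the lemma records $h'_{ij}=-\rho^{-1}\partial^2_{ij}u$. Fortunately the Gauss identity you actually use, $D'_aD'_b(f|_\Sigma)=(\textup{Hess}'_M f)(X_a,X_b)-h'_{ab}\,df(\nu')$, is exactly the one compatible with this convention (since then $\nabla'_{X_a}X_b=D'_{X_a}X_b-h'_{ab}\nu'$), so the computation and the final formulas are unaffected.
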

	\begin{proof}
		Let $\hat{\Gamma}_{ab}^c$ be the Christoffel symbols of $\hat{\gamma}_{ab}$. The non-zero components of $\hat{\Gamma}_{ab}^c$ are
		\begin{align*}
		\hat{\Gamma}_{\xi i}^\xi=\Psi^{-1}\frac{d\Psi}{d q}\frac{\partial u}{\partial\theta^i},\ \hat{\Gamma}_{\xi \xi}^i=-\Psi \frac{d\Psi}{d q}\frac{\partial u}{\partial\theta^i},\ \hat{\Gamma}_{\xi \xi}^\xi=\Psi^{-1}\frac{d\Psi}{d q}\frac{\partial u}{\partial\xi}.
		\end{align*}
		For any smooth function $\tilde{u}$, 
		\begin{align*}
		\hat{D}_\xi\hat{D}_\xi  \tilde{u}=& \frac{\partial \tilde{u}}{\partial \xi^2}+ \Psi\frac{d\Psi}{dq}\delta^{ij} \frac{\partial u}{\partial\theta^i}\frac{\partial \tilde{u}}{\partial\theta^j} - \Psi^{-1}\frac{d\Psi}{dq} \frac{\partial u}{\partial \xi}\frac{\partial \tilde{u}}{\partial \xi}, \\
		\hat{D}_i \hat{D}_j  \tilde{u}=& \frac{\partial^2 \tilde{u}}{\partial\theta^i\partial\theta^j},\\
		\hat{D}_\xi \hat{D}_i  \tilde{u}=& \frac{\partial^2 \tilde{u}}{\partial\xi\partial\theta^i}- \Psi^{-1}\frac{d\Psi}{dq} \frac{\partial u}{\partial\theta^i}\frac{\partial \tilde{u}}{\partial\xi}.
		\end{align*}
		Here $\hat{D}$ is the Levi-Civita connection for $\hat{\gamma}$. Because $\gamma'_{ab}=\hat{\gamma}_{ab}+\frac{\partial u}{\partial x^a}\frac{\partial u}{\partial x^b}$, we have
		\begin{align*}
		D_a'D_b' u=\rho^{-2}\hat{D}_a\hat{D}_b  u.
		\end{align*}
		Combining the above, the assertion for $D'_iD'_j u$ and $D'_\xi D'_i u$ follows. For $D'_\xi D'_\xi u$, we use $$\delta^{ij}\frac{\partial u}{\partial\theta^i}\frac{\partial u}{\partial\theta^j}=-\Psi^{-2}\left(\frac{\partial u}{\partial \xi} \right)^2-1+\rho^2.$$
		Then
		\begin{align*}
		D'_\xi D'_\xi u=&\rho^{-2}\hat{D}_\xi\hat{D}_\xi  {u}=\rho^{-2}\left(\frac{\partial \tilde{u}}{\partial \xi^2}+ \Psi\frac{d\Psi}{dq}\delta^{ij} \frac{\partial u}{\partial\theta^i}\frac{\partial  {u}}{\partial\theta^j} - \Psi^{-1}\frac{d\Psi}{dq}\left( \frac{\partial u}{\partial \xi}\right)^2 \right)\\
		=&\rho^{-2}\left(\frac{\partial \tilde{u}}{\partial \xi^2}   - 2\Psi^{-1}\frac{d\Psi}{dq} \left( \frac{\partial u}{\partial \xi}\right)^2 -\Psi\frac{d\Psi}{dq}+\rho^2 \Psi\frac{d\Psi}{dq}\right)\\
		= &\rho^{-2}\frac{\partial \tilde{u}}{\partial \xi^2}   - 2\rho^{-2}\Psi^{-1}\frac{d\Psi}{dq} \left( \frac{\partial u}{\partial \xi}\right)^2 -\rho^{-2}\Psi\frac{d\Psi}{dq}+  \Psi\frac{d\Psi}{dq}.
		\end{align*}
	\end{proof}
	From Lemmas \ref{lemma:A1} and \ref{lemma:A2}, we obtain the following relation between $h'_{ab}$ and $D'_bD'_bu$.
	\begin{cor}\label{cor:h-hessian}
		\begin{equation}\label{h'-hessian}
		\begin{split}
		h'_{ab} =-\rho D'_a D'_b u +\rho\Psi\frac{d \Psi}{dq} D'_a\xi D'_b\xi .
		\end{split}
		\end{equation}
	\end{cor}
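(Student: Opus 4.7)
The statement is a direct algebraic consequence of Lemmas \ref{lemma:A1} and \ref{lemma:A2}, so the plan is to compare the two sets of formulas component by component. The only extra ingredient needed is the observation that since $\xi$ is one of the coordinates $\{x^a\}$ on $\Sigma$, the covariant derivative $D'_a\xi$ is just the coordinate 1-form $d\xi$: it equals $1$ when $a=\xi$ and $0$ when $a$ corresponds to some $\theta^i$. Therefore the tensor $D'_a\xi\, D'_b\xi$ has a single nonzero entry, namely its $(\xi,\xi)$-component equal to $1$. The right-hand side of \eqref{h'-hessian} thus reduces case by case.

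Concretely, I would verify the three cases $(a,b)=(\xi,\xi)$, $(a,b)=(i,j)$, and $(a,b)=(\xi,i)$. In the pure-$\theta$ case $(i,j)$, the correction term vanishes, and Lemma \ref{lemma:A2} gives $D'_iD'_j u=\rho^{-2}\partial^2 u/\partial\theta^i\partial\theta^j$, so $-\rho\,D'_iD'_j u=-\rho^{-1}\partial^2 u/\partial\theta^i\partial\theta^j=h'_{ij}$ by Lemma \ref{lemma:A1}. For $(\xi,i)$, again the correction term vanishes, and the factor of $-\rho$ converts the $\rho^{-2}$ prefactors in Lemma \ref{lemma:A2} into the $\rho^{-1}$ prefactors appearing in Lemma \ref{lemma:A1}, matching $h'_{\xi i}$ directly.

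The $(\xi,\xi)$ case is the only one requiring attention: here the correction term $\rho\Psi\tfrac{d\Psi}{dq}$ is present, and Lemma \ref{lemma:A2} contains an extra summand $\Psi\tfrac{d\Psi}{dq}$ (without the $\rho^{-2}$ factor) that arose from using $\delta^{ij}\partial_{\theta^i}u\,\partial_{\theta^j}u=-\Psi^{-2}(\partial_\xi u)^2-1+\rho^2$. Multiplying $D'_\xi D'_\xi u$ by $-\rho$ yields a term $-\rho\,\Psi\tfrac{d\Psi}{dq}$ which exactly cancels $+\rho\,\Psi\tfrac{d\Psi}{dq}$ from the correction, while the remaining terms collapse to the expression for $h'_{\xi\xi}$ in Lemma \ref{lemma:A1}. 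I do not anticipate any genuine obstacle here; the only bookkeeping issue is ensuring the cancellation of the $\rho\,\Psi\tfrac{d\Psi}{dq}$ terms is done carefully, since these are exactly the pieces that make the identity conformally meaningful (they encode the curvature of the background metric \eqref{def:g'} in the $q\xi$-plane).
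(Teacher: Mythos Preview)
Your proposal is correct and is exactly the approach the paper takes: the paper simply states that the corollary follows from Lemmas \ref{lemma:A1} and \ref{lemma:A2}, and your component-by-component verification (including the cancellation of the $\rho\,\Psi\tfrac{d\Psi}{dq}$ terms in the $(\xi,\xi)$ case) is precisely the computation this implies.
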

	Next, we record the formulas under a conformal transformation. Let $\psi(q)$ be a smooth function of $q$. Define  $$ \check{g}=e^{2\psi}g'.$$ Let $\check{R}_{\alpha\beta\lambda\mu}$ be the Riemannian curvature of $\check{g}$.
	\begin{lemma}
		The non-zero components of $\check{R}_{\alpha\beta\lambda\mu}$ are
		\begin{equation}\label{curvature-gt}
		\begin{split}
		\check{R}_{q\xi q\xi}=&-e^{2\psi}\left( \Psi\frac{d^2\Psi}{dq^2}+\Psi^2\frac{d^2\psi}{dq^2}+\Psi\frac{d\Psi}{dq}\frac{d\psi}{dq} \right),\\
		\check{R}_{qi qi}=&  -e^{2\psi}\frac{d^2\psi}{dq^2}  ,\\
		\check{R}_{\xi i\xi  i}=&  -e^{2\psi}\Psi\frac{d\Psi}{dq} \frac{d \psi}{dq }-e^{2\psi}\Psi^2\left(\frac{d\psi}{dq} \right)^2,\\
		\check{R}_{ ij i j}=&  -e^{2\psi} \left(\frac{d\psi}{dq} \right)^2,\ i\neq j.
		\end{split}
		\end{equation}
		\begin{proof}
			As $\check{g}=e^{2\psi}g'$, the formula of $\check{R}_{\alpha\beta\lambda\mu}$ is given by \cite[page 58]{Besse} 
			\begin{align*}
			\check{R}_{\alpha\beta\lambda\mu}=e^{2\psi}{R}'_{\alpha\beta\lambda\mu}-e^{2\psi}(g'_{\alpha\lambda}T_{\beta\mu}+g'_{\beta\mu}T_{\alpha\lambda}-g'_{\alpha\mu}T_{\beta\lambda}-g'_{\beta\lambda}T_{\alpha \mu}),
			\end{align*}
			with $T_{\alpha \beta}=\nabla'_\alpha\nabla'_\beta\psi-\nabla'_\alpha\psi\nabla'_\beta\psi+2^{-1}|\nabla'\psi|^2_{g'}g'_{\alpha\beta}$. From \eqref{Christ-gp}, non-zero components of $T_{\alpha\beta}$ are
			\begin{align*}
			T_{qq}=&\frac{d^2\psi}{dq^2}-2^{-1}\left(\frac{d\psi}{dq} \right)^2,\\
			T_{\xi\xi}=&\Psi\frac{ d\Psi}{dq}\frac{d\psi}{dq}+2^{-1}\Psi^2 \left(\frac{d\psi}{dq} \right)^2,\\
			T_{ii}= &2^{-1}\left(\frac{d\psi}{dq} \right)^2.
			\end{align*}
			Combining with \eqref{curvature-g'}, the assertion follows a direct computation.
		\end{proof}
	\end{lemma}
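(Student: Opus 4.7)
The plan is to apply the standard conformal-transformation formula for the Riemann tensor recorded in \cite[page 58]{Besse}:
\[
\check{R}_{\alpha\beta\lambda\mu}=e^{2\psi}R'_{\alpha\beta\lambda\mu}-e^{2\psi}\bigl(g'_{\alpha\lambda}T_{\beta\mu}+g'_{\beta\mu}T_{\alpha\lambda}-g'_{\alpha\mu}T_{\beta\lambda}-g'_{\beta\lambda}T_{\alpha\mu}\bigr),
\]
where $T_{\alpha\beta}=\nabla'_\alpha\nabla'_\beta\psi-\nabla'_\alpha\psi\,\nabla'_\beta\psi+\tfrac{1}{2}|\nabla'\psi|^2_{g'}\,g'_{\alpha\beta}$. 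The only input about $g'$ I need is the Christoffel symbols \eqref{Christ-gp} and the single nonzero curvature component \eqref{curvature-g'}; everything else is direct substitution.

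First I would compute $T_{\alpha\beta}$. Since $\psi$ depends only on $q$, one has $\nabla'\psi=\frac{d\psi}{dq}\frac{\partial}{\partial q}$ and hence $|\nabla'\psi|^2_{g'}=(\frac{d\psi}{dq})^2$. The Hessian $(\nabla')^2\psi$ is obtained by differentiating $d\psi=\frac{d\psi}{dq}\,dq$; using \eqref{Christ-gp} the only nonzero components are $(\nabla')_q(\nabla')_q\psi=\frac{d^2\psi}{dq^2}$ and $(\nabla')_\xi(\nabla')_\xi\psi=\Psi\frac{d\Psi}{dq}\frac{d\psi}{dq}$ (coming from the Christoffel symbol $(\Gamma')_{\xi\xi}^q$). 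Assembling $T_{\alpha\beta}$ yields exactly the three nonzero entries
\[
T_{qq}=\tfrac{d^2\psi}{dq^2}-\tfrac{1}{2}\bigl(\tfrac{d\psi}{dq}\bigr)^2,\qquad
T_{\xi\xi}=\Psi\tfrac{d\Psi}{dq}\tfrac{d\psi}{dq}+\tfrac{1}{2}\Psi^2\bigl(\tfrac{d\psi}{dq}\bigr)^2,\qquad
T_{ii}=\tfrac{1}{2}\bigl(\tfrac{d\psi}{dq}\bigr)^2.
\]

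Next I would substitute into the conformal curvature formula, case by case for each pair of independent planes. For $\check{R}_{q\xi q\xi}$ the contribution of $R'$ is $-\Psi\frac{d^2\Psi}{dq^2}$ from \eqref{curvature-g'}, and the correction picks up $g'_{qq}T_{\xi\xi}+g'_{\xi\xi}T_{qq}=T_{\xi\xi}+\Psi^2 T_{qq}$; after cancellation of the $(\frac{d\psi}{dq})^2$ terms this produces the claimed expression $-e^{2\psi}(\Psi\Psi''+\Psi^2\psi''+\Psi\Psi'\psi')$. For $\check{R}_{qiqi}$ the $R'$ term vanishes, and the correction is just $g'_{qq}T_{ii}+g'_{ii}T_{qq}=T_{ii}+T_{qq}=\frac{d^2\psi}{dq^2}$, giving $-e^{2\psi}\psi''$. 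For $\check{R}_{\xi i\xi i}$ one gets $T_{ii}\Psi^2+T_{\xi\xi}$ which combines to $\Psi\Psi'\psi'+\Psi^2(\psi')^2$. Finally $\check{R}_{ijij}=-e^{2\psi}(T_{ii}+T_{jj})=-e^{2\psi}(\psi')^2$. All remaining components vanish because $T$ is diagonal and each off-diagonal Kulkarni--Nomizu contraction involves an off-diagonal $g'$ or $T$.

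There is no real obstacle here: the computation is entirely mechanical once one has $T_{\alpha\beta}$ in hand. The only mild care needed is keeping track of the sign conventions in the conformal formula and making sure the $\tfrac{1}{2}|\nabla'\psi|^2 g'_{\alpha\beta}$ piece of $T$ cancels correctly against the $\nabla'_\alpha\psi\,\nabla'_\beta\psi$ piece in the mixed components, which it does since $\psi$ depends only on $q$.
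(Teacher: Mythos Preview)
Your proposal is correct and follows essentially the same approach as the paper: both use the conformal curvature formula from Besse, compute the same three nonzero components of $T_{\alpha\beta}$ from the Christoffel symbols \eqref{Christ-gp}, and then substitute. You have simply spelled out the case-by-case substitution that the paper summarizes as ``a direct computation.''
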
 
	We turn to the second fundamental form of $\Sigma$ with respect to $\check{g}$. The normal vector and the induced metric are given by $\check{\nu}=e^{-\psi}\nu'$ and $\check{\gamma}=e^{2\psi}\gamma'$ respectively. Let $\check{h}_{ab}$ be the second fundamental form with respect to $\check{g}$.
	\begin{lemma}\label{lem:ht}
		\begin{align}\label{hcheck}
		\check{h}_{ab}=e^\psi  \left(h'_{ab}+ \rho^{-1}\frac{d\psi}{dq} \gamma'_{ab}\right).
		\end{align}
	\end{lemma}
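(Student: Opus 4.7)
The claim is the standard conformal transformation formula for the second fundamental form specialized to our setting. The plan is a direct computation using the conformal change of the Levi-Civita connection.

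First I would recall that under the conformal rescaling $\check{g}=e^{2\psi}g'$, the Levi-Civita connection transforms as
\begin{equation*}
\check{\nabla}_X Y=\nabla'_X Y+(X\psi)Y+(Y\psi)X-g'(X,Y)\,\nabla'\psi.
\end{equation*}
Writing $X_a=\partial F/\partial x^a$ for the coordinate tangent vectors, I would apply this to $\check{\nabla}_{X_a}X_b$ and then pair with $\check{\nu}=e^{-\psi}\nu'$ via $\check{g}=e^{2\psi}g'$. Using the sign convention consistent with Lemma \ref{lemma:A1} (so that the boundary torus $\{q=\mathrm{const}\}$ has $h'_{\xi\xi}=\rho^{-1}\Psi\,d\Psi/dq$), one has $h_{ab}=-g(\nabla_{X_a}X_b,\nu)$, giving
\begin{equation*}
\check{h}_{ab}=-e^{\psi}g'\bigl(\nabla'_{X_a}X_b,\nu'\bigr)-e^{\psi}(X_a\psi)\,g'(X_b,\nu')-e^{\psi}(X_b\psi)\,g'(X_a,\nu')+e^{\psi}g'(X_a,X_b)\,g'(\nabla'\psi,\nu').
\end{equation*}

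Next I would use that $X_a,X_b$ are tangent to $\Sigma$ while $\nu'$ is $g'$-normal, so the two middle terms vanish and the first term is $e^{\psi}h'_{ab}$, while $g'(X_a,X_b)=\gamma'_{ab}$. It remains to compute $g'(\nabla'\psi,\nu')$. Since $\psi=\psi(q)$ depends only on $q$, one has $\nabla'\psi=\tfrac{d\psi}{dq}\,\partial_q$, and from the expression \eqref{normal-gp} for $\nu'$,
\begin{equation*}
g'(\partial_q,\nu')=\rho^{-1}g'(\partial_q,\partial_q)=\rho^{-1},
\end{equation*}
so $g'(\nabla'\psi,\nu')=\rho^{-1}\tfrac{d\psi}{dq}$. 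Substituting yields exactly \eqref{hcheck}.

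There is no serious obstacle here; the only bookkeeping point is matching the sign convention for $h$ so that the resulting formula agrees with Lemma \ref{lemma:A1} when one takes $\psi\equiv 0$ (in which case $\check{g}=g'$ and the identity is trivially $\check{h}_{ab}=h'_{ab}$).
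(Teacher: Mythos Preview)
Your proof is correct and follows essentially the same route as the paper's: both use the standard conformal transformation formula for the Levi-Civita connection, pair with $\check{\nu}=e^{-\psi}\nu'$ via $\check{g}=e^{2\psi}g'$, and use that $\nabla'\psi=\tfrac{d\psi}{dq}\partial_q$ together with $g'(\partial_q,\nu')=\rho^{-1}$. The paper's version is just more compressed, suppressing the two vanishing middle terms you wrote out explicitly.
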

	\begin{proof}
		Let $\check{\nabla}$ be the Levi-Civita connection of $\check{g}$. From
		\begin{align*}
		\check{\nabla}_X Y= \nabla'_X Y+g'( \nabla'\psi,X)Y+g'( \nabla'\psi,Y)X-g'( X,Y)\nabla'\psi,
		\end{align*}
		\begin{align*}
		\check{h}_{ab}=-\check{g}\left( \check{\nabla}_{\frac{\partial F}{\partial x^a}}  \frac{\partial F}{\partial x^b},\check{\nu} \right)=-e^\psi g'\left (  \nabla'_{\frac{\partial F}{\partial x^a}}  \frac{\partial F}{\partial x^b}-\gamma'_{ab}\frac{d\psi}{dq }\frac{\partial}{\partial q},\nu \right)=e^\psi  \left(h'_{ab}+\rho^{-1}\frac{d\psi}{dq} \gamma'_{ab}\right).
		\end{align*}
	\end{proof}
	Using \eqref{h'-hessian}, \eqref{hcheck}, we have
	\begin{align*}
			\check{h}_{ab}=e^\psi  \left(-\rho D'_aD'_b u+\rho\Psi\frac{d\Psi}{dq}D'_a\xi D'_b\xi + \rho^{-1}\frac{d\psi}{dq} \gamma'_{ab}\right).
\end{align*}	
To get the mean curvature $\check{H}$, we need to calculate $|D'\xi|^2_{\gamma'}$. Recall that $$\gamma'_{ab}=\hat{\gamma}_{ab}+\frac{\partial u}{\partial x^a}\frac{\partial u}{\partial x^b}.$$
We deduce
\begin{align*}
(\gamma')^{ab}=\hat{\gamma}^{ab}-\rho^{-2}\hat{\gamma}^{ac}\hat{\gamma}^{bd} \frac{\partial u}{\partial x^c}\frac{\partial u}{\partial x^d}. 
\end{align*} Hence
\begin{align*}
|D'\xi|^2_{\gamma'}=&(\gamma')^{\xi\xi}=\Psi^{-2}-\rho^{-2}\Psi^{-4}\left(\frac{\partial u}{\partial\xi} \right)^2\\
=&\rho^{-2}\Psi^{-2}\left( 1+\delta^{ij}\frac{\partial u}{\partial\theta^i}\frac{\partial u}{\partial\theta^j} \right).
\end{align*}   
With $\check{\gamma}^{ab}=e^{-2\psi}(\gamma')^{ab}$, 	we obtain 
	\begin{cor}
		The mean curvature of $\Sigma$ with respect to $\check{g}$ is given by
		\begin{align}\label{H-gt}
		\check{H}=e^{-\psi}\left( -\rho \Delta' u+\rho^{-1}\Psi^{-1}\frac{d\Psi}{ dq}\delta^{ij} \frac{\partial u}{\partial\theta^i}\frac{\partial u}{\partial\theta^j}+(n-1)\rho^{-1}\frac{d\psi}{dq} +\rho^{-1}\Psi^{-1}\frac{d\Psi}{dq}\right).
		\end{align}
		Here $\Delta'$ is the Laplace-Beltrami operator of $\gamma'$.
	\end{cor}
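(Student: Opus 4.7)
The plan is to compute $\check{H}$ as the trace $\check{\gamma}^{ab}\check{h}_{ab}$, assembling the ingredients already established just above the corollary. From Lemma \ref{lem:ht} combined with Corollary \ref{cor:h-hessian}, one has
\begin{equation*}
\check{h}_{ab}=e^{\psi}\left(-\rho\, D'_aD'_b u+\rho\Psi\frac{d\Psi}{dq}D'_a\xi\, D'_b\xi+\rho^{-1}\frac{d\psi}{dq}\gamma'_{ab}\right),
\end{equation*}
and the conformal inverse reads $\check{\gamma}^{ab}=e^{-2\psi}(\gamma')^{ab}$, so that contracting produces an overall factor $e^{-\psi}$ matching the one in \eqref{H-gt}.

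Next I would contract term by term. The hessian piece gives $(\gamma')^{ab}(-\rho\, D'_aD'_b u)=-\rho\,\Delta' u$ by the definition of the Laplace--Beltrami operator. The pure-conformal piece gives $(\gamma')^{ab}\bigl(\rho^{-1}\tfrac{d\psi}{dq}\gamma'_{ab}\bigr)=(n-1)\rho^{-1}\tfrac{d\psi}{dq}$, since $\gamma'$ is an $(n-1)$-dimensional metric. Finally, the rank-one $\xi$-term contracts to $\rho\Psi\tfrac{d\Psi}{dq}|D'\xi|^2_{\gamma'}$, and substituting the already-derived identity
\begin{equation*}
|D'\xi|^2_{\gamma'}=\rho^{-2}\Psi^{-2}\left(1+\delta^{ij}\frac{\partial u}{\partial\theta^i}\frac{\partial u}{\partial\theta^j}\right)
\end{equation*}
converts this contribution to $\rho^{-1}\Psi^{-1}\tfrac{d\Psi}{dq}+\rho^{-1}\Psi^{-1}\tfrac{d\Psi}{dq}\,\delta^{ij}\tfrac{\partial u}{\partial\theta^i}\tfrac{\partial u}{\partial\theta^j}$. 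Summing the three contractions, pulling out the common factor $e^{-\psi}$, and rearranging reproduces \eqref{H-gt} verbatim.

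There is essentially no obstacle here: all nontrivial input (the conformal transformation law \eqref{hcheck} for the second fundamental form, the identification \eqref{h'-hessian} of $h'_{ab}$ in terms of $D'D'u$, and the explicit computation of $(\gamma')^{\xi\xi}$ via the Sherman--Morrison style formula for the inverse of $\hat{\gamma}_{ab}+\partial_a u\,\partial_b u$) has been carried out in the preceding lemmas, so the corollary reduces to bookkeeping in a single trace.
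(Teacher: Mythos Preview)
Your proposal is correct and follows essentially the same route as the paper: combine \eqref{hcheck} with \eqref{h'-hessian} to express $\check{h}_{ab}$, trace against $\check{\gamma}^{ab}=e^{-2\psi}(\gamma')^{ab}$, and use the computed value of $|D'\xi|^2_{\gamma'}$ to evaluate the rank-one $\xi$-term. The three contractions you identify are exactly those the paper implicitly carries out, so this is the same argument.
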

	\section{Projection}
	Let $M$ be a $n$-dimensional manifold with coordinates $\{y^\alpha\}_{\alpha=1}^n$. Let  $g_{\alpha\beta} $ be a Riemannian metric on $M$ and $\Sigma^{n-1}$ be a hypersurface. We use $\{x^a\}_{a=1}^n$ to denote a local coordinates of $\Sigma$. Denote by $\nu^\alpha$ the unit normal vector of $\Sigma$, by $\gamma_{ab}$ the induced metric on $\Sigma$ and by $h_{ab}$ the second fundamental form of $\Sigma$ with respect to $\nu^\alpha$. Define the projection tensor
	\begin{align}
	P^\alpha_\beta=\delta^\alpha_\beta- \nu_\beta \nu^\alpha.
	\end{align}
	Denote by $\nabla$ and $D$ the Levi-Civita connection of $g_{\alpha\beta}$ and $\gamma_{ab}$ respectively.
	
	\begin{lemma}\label{lem:projection}
		Let $\overline{F}_\alpha$ be a one-form and $F_a=P_a^\alpha \overline{F}_\alpha$ be the projection of $\overline{F}_\alpha$. Then
		\begin{align}
		D_a F_b=&P_a^\alpha P_b^\beta \nabla_\alpha \overline{F}_\beta- \overline{F}_{\alpha}\nu^\alpha h_{ab}.\\
		D_a (F_\alpha \nu^\alpha)=& (P^\beta_a \nabla_\beta \overline{F}_\alpha) \nu^\alpha+    F_b h^b_a.
		\end{align}  
	\end{lemma}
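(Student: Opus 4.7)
The plan is to derive both identities as direct consequences of the Gauss and Weingarten formulas. First I would record the sign convention used throughout the paper: from Lemma \ref{lemma:A1}, one sees that $h_{ab} = -g(\nabla_{\partial_a F}\partial_b F,\nu)$, so for tangent vector fields $X,Y$ along $\Sigma$ the Gauss formula reads
\[
\nabla_X Y = D_X Y - h(X,Y)\nu.
\]
Differentiating $g(\nu,\nu)=1$ shows $\nabla_X\nu$ is tangent to $\Sigma$, and pairing with a tangent vector $Y$ gives $g(\nabla_X\nu,Y) = -g(\nu,\nabla_X Y) = h(X,Y)$, so the Weingarten relation reads $(\nabla_{X_a}\nu)^\alpha = h_a^b P_b^\alpha$, where $X_a = \partial F/\partial x^a$ has components $X_a^\alpha = P_a^\alpha$.

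For the first identity, my key observation is that $F_a = P_a^\alpha\overline F_\alpha$ satisfies $F(Y) = \overline F(Y)$ for any tangent vector $Y$ along $\Sigma$, because the normal component of $\overline F$ is annihilated when evaluated on a tangent vector. I would then use the defining relations
\[
(D_X F)(Y) = X(F(Y)) - F(D_X Y), \qquad (\nabla_X \overline F)(Y) = X(\overline F(Y)) - \overline F(\nabla_X Y),
\]
substitute the Gauss formula into the second equation, and subtract the first. The difference $-\overline F(\nabla_X Y) + \overline F(D_X Y) = h(X,Y)\overline F(\nu)$ yields
\[
(D_X F)(Y) = (\nabla_X\overline F)(Y) - h(X,Y)\,\overline F_\alpha\nu^\alpha,
\]
which, after setting $X = X_a$ and $Y = X_b$, is precisely the claim in index form.

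For the second identity, I would view $\varphi := \overline F_\alpha\nu^\alpha$ as a scalar on $\Sigma$, so that $D_a\varphi$ coincides with the directional derivative $X_a(\varphi)$. Extending $\nu$ smoothly off $\Sigma$ (the value of $D_a\varphi$ is independent of the extension since $X_a$ is tangential) and applying the Leibniz rule gives
\[
D_a(\overline F_\alpha\nu^\alpha) = P_a^\beta(\nabla_\beta\overline F_\alpha)\nu^\alpha + \overline F_\alpha(\nabla_{X_a}\nu)^\alpha,
\]
and the Weingarten relation turns the second term into $\overline F_\alpha h_a^b P_b^\alpha = h_a^b F_b$, which is exactly the stated identity. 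The only mildly delicate point in the whole argument is keeping the sign of $h$ consistent with the paper's convention; beyond that, both formulas are routine Gauss-Weingarten decompositions and no genuine obstacle arises.
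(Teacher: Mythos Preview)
Your argument is correct. The paper actually states Lemma~\ref{lem:projection} without proof, so there is no original argument to compare against; your derivation via the Gauss formula $\nabla_X Y=D_X Y-h(X,Y)\nu$ and the Weingarten relation $\nabla_{X_a}\nu=h_a^{\,b}X_b$ (with the sign convention you correctly extracted from Lemma~\ref{lemma:A1}) is the standard route and goes through exactly as written.
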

	Let 
	\begin{align}\label{def:K}
	K_{ab}=&P^\alpha_a\nu^\beta P^\lambda_b \nu^\mu R_{\alpha\beta\lambda\mu}.
	\end{align}
	
	\begin{lemma}\label{lem:nablaR}
		For any $m\in\mathbb{N}\cup\{0\}$ there exists a degree $m+1$ polynomial $p_m$ such that the following holds. Suppose there exists a constant $C $ such that for all $0\leq j\leq m$,
		\begin{align}\label{assumptionB}
		|\nabla^j Rm|_g\leq C ,\ |D^{j-1}h|_\gamma\leq C .
		\end{align} 
		Then 
		\begin{align*}
		|D^m K |_\gamma \leq p_m(C ).
		\end{align*}
		
	\end{lemma}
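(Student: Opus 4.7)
My plan is to prove Lemma \ref{lem:nablaR} by induction on $m$, using Lemma \ref{lem:projection} to track how successive applications of $D$ act on the tensor $K$. The base case $m = 0$ is immediate from \eqref{def:K}: since $P^\alpha_\beta$ and the unit normal $\nu^\alpha$ are universally bounded, $|K|_\gamma \leq |Rm|_g \leq C$, a polynomial of degree $1$ in $C$.

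For the inductive step I would establish the structural claim that, for every $m \geq 0$, $D^m K_{ab}$ equals a universal polynomial combination of monomials of the schematic form
\begin{align*}
\bigl(\textup{bounded contraction of } P \textup{ and } \nu\bigr) \cdot \nabla^{j_0} Rm \cdot D^{j_1} h \cdots D^{j_k} h,
\end{align*}
with $k \geq 0$, $j_i \geq 0$, and $j_0 + j_1 + \cdots + j_k + k = m$. Granted this, the bound follows immediately: under \eqref{assumptionB}, each factor $\nabla^{j_0} Rm$ satisfies $j_0 \leq m$ and is bounded by $C$, and (when $k \geq 1$) each $D^{j_i} h$ satisfies $j_i \leq m - k \leq m - 1$ and is also bounded by $C$. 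Since each monomial has $k + 1 \leq m + 1$ tensorial factors, $|D^m K|_\gamma$ is bounded by a polynomial in $C$ of degree at most $m + 1$.

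To propagate the structural claim from $m$ to $m + 1$, I would apply Lemma \ref{lem:projection} to each tensorial slot of $K_{ab} = P^\alpha_a \nu^\beta P^\lambda_b \nu^\mu R_{\alpha\beta\lambda\mu}$, together with the Weingarten identity expressing $P^\gamma_c \nabla_\gamma \nu^\alpha$ as an $h$-multiple of a tangent vector, and its consequence $P^\gamma_c \nabla_\gamma P^\alpha_\beta = -h_{c\beta} \nu^\alpha - h_c{}^\alpha \nu_\beta$. Applied to a monomial of the form above, $D_c$ either (i) projects an ambient covariant derivative onto the curvature factor, raising $j_0$ by one, (ii) differentiates one of the $D^{j_i} h$ factors, raising $j_i$ by one, or (iii) differentiates a $P$ or $\nu$ factor, producing a new $D^0 h$ factor and increasing $k$ by one. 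In every case the total $j_0 + \sum j_i + k$ rises by exactly one, preserving the structure.

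I expect the principal difficulty to be bookkeeping rather than conceptual: one must verify that every term produced by distributing $D_c$ across the four tensorial slots of $K$ can be rewritten, via Lemma \ref{lem:projection} and the Weingarten identities, as a member of the polynomial class above. No term escapes the structure, since any ambient derivative landing on a $\nu$ or $P$ factor becomes, after projection, a factor of $h$ contracted with bounded $P$ and $\nu$ data.
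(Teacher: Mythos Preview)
Your proposal is correct and follows essentially the same approach as the paper. The paper packages the bookkeeping by introducing the families $L_{m,k}$ of projections of $\nabla^m Rm\cdot\nu^{\otimes k}$ and deriving the recursion $D L_{m,k}=L_{m+1,k}+L_{m,k-1}\ast h+L_{m,k+1}\ast h$, but the resulting structural decomposition and degree count are exactly the ones you describe.
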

	\begin{proof}
		For $m,k\in\mathbb{N}\cup\{0\}$, let $L_{m,k}$ be the collection $4+m-k$ tensors on $\Sigma$ obtained from projections of $\nabla^m Rm\cdot \nu^{\otimes k}$. We adapt the convention that $L_{m,-1}=0$ and $L_{m,k}=0$ if $k\geq m+3$. Note that $K\in L_{0,2}$. From Lemma \ref{lem:projection}, we have
		\begin{align*}
		D L_{m,k}=L_{m+1,k}+L_{m,k-1}\ast h+L_{m,k+1}\ast h. 
		\end{align*}
		Here we write $L \ast h$ for any linear combination formed by contracting $L$ and $h$ by $\gamma$. Through induction, we have for all $m\in \mathbb{N}\cup\{0\}$,  
		\begin{align*}
		D^m L_{0,2}= \sum  L_{j,i}\ast D^{\ell_1}h\ast \cdots \ast D^{\ell_k}h,
		\end{align*}
		where the summation goes over
		\begin{align*}
		0\leq i\leq j+3,\ 0\leq \ell_i,\ \textup{and}\  j+k+\sum_{i=1}^k \ell_i=m.
		\end{align*}
		Under the assumption \eqref{assumptionB}, each term $L_{j,i}$ and $D^{\ell_i}$ above are bounded by $C$. Each individual summand $L_{j,i}\ast D^{\ell_1}h\ast \cdots \ast D^{\ell_k}h$ is bounded by $C^{k+1}$. Then the assertion then follows.
	\end{proof}
	
\end{appendix}

\end{document}